\newtheorem{thm}{Theorem}[section]
\newtheorem{cor}[thm]{Corollary}
\newtheorem{lem}[thm]{Lemma}
\newtheorem{prop}[thm]{Proposition}
\newtheorem{prop-def}[thm]{Proposition-Definition}
\theoremstyle{definition}
\newtheorem{Def}[thm]{Definition}
\theoremstyle{remark}
\newtheorem{rmk}[thm]{\bf Remark}
\newtheorem{ex}[thm]{\bf Example}
\numberwithin{equation}{section}
\def\op{{\rm op}}
\def\Z{\mathbb{Z}}
\def\al{\alpha}
\def\xr{\xrightarrow}
\def\Hom{{\rm Hom}}
\def\C{\mathcal{C}}
\def\A{\mathcal{A}}
\def\B{\mathcal{B}}
\def\P{\mathcal{P}}
\def\Id{{\rm Id}}
\def\A{\mathcal{A}}
\def\Sum{\sum\limits}
\def\Ker{\mbox{\rm Ker}}
\def\Tot{\Xi}
\def\cone{{\rm Con}}
\def\Ker{{\rm Ker}\,}
\def\Coker{{\rm Coker}\,}
\def\E{\mathcal{E}}
\def\bu{{\bullet}}
\def\Proj{{\rm Proj}\mbox{-}}
\def\proj{{\rm proj}\mbox{-}}
\def\La{\Lambda}
\def\Sum{\mbox{\rm Sum-}}
\def\Th{\Theta}
\def\grb{\mathbb{Z}_+^b}
\def\gr{\mathbb{Z}_+}
\def\K{K}
\def\add{{\rm add}\mbox{-}}
\begin{document}
\title[A new Frobenius exact structure on the category of complexes]
{A new Frobenius exact structure on the category of complexes}

\author[Y.-F. Ben\quad Y.-H. Bao and X.-N. Du]
{Yan-Fu Ben\quad \quad Yan-Hong Bao$^*$ and Xian-Neng Du}

\thanks{$^*$ Corresponding author: Y.-H. Bao (Email: baoyh@ahu.edu.cn)}
\subjclass[2010]{18E30, 18E10, 16E35}
\date{\today}
\keywords{Exact category, Frobenius category, Homotopy category, Triangulated category}

\maketitle

\dedicatory{}%
\commby{}%

\begin{abstract}
Let $(1)$ be an automorphism on an additive category $\B$, and let
$\eta\colon (1)\to \Id_{\B}$ be a natural transformation satisfying
$\eta_{X(1)}=\eta_X(1)$ for any object $X$ in $\B$. We construct a new Frobenius exact structure on the category of complexes in $\B$, which is associated to the natural transformation $\eta$. As a consequence, the category introduced in Definition 2.4 of [J. Rickard, Morita theory for derived categories, J. London Math. Soc. 39(1989), 436--456] has a Frobenius exact structure.
\end{abstract}

\section{Introduction}

Let $\B$ be an additive category and $C(\B)$ the category of
complexes in $\B$. It is well known that $C(\B)$ has a Frobenius exact structure, where the conflations are given by chainwise split short exact sequences of complexes. The corresponding stable category coincides with the homotopy category $K(\B)$ of complexes, which is a triangulated category.

Suppose that the category $\B$ has a certain
automorphism endofunctor $(1)$ and that $\eta\colon (1) \to \Id_{\B}$
is a natural transformation, which satisfies
$\eta_{X(1)}=\eta_{X}(1)$ for any object $X$ in $\B$. Associated to the natural transformation $\eta$, we construct a new Frobenius exact structure on $C(\B)$, whose conflations are given by certain chainwise split short exact sequences of complexes; see Theorem \ref{Frob cat (1)}. Consequently, the corresponding stable category $K_\eta(\B)$ is a triangulated category by \cite[Theorem I.2.8]{Ha1}. To describe the morphisms in $K_\eta(\B)$, we define a new homotopy relation on chain maps. In particular, the stable category $K_\eta(\B)$ is different from $K(\B)$, and thus is a new triangulated category.

This work is inspired by the construction of the functor $\Phi$ in \cite[Proposition 2.10]{Ric}, where the functor $\Phi$
induces a triangle functor in \cite[Proposition 2.11]{Ric} and then gives rise to the famous derived Morita theory.  More precisely, for an additive category $\mathcal{A}$, Rickard introduces an additive category $G(\A)$ in \cite[Definition 2.4]{Ric} and then defines $\Phi$ as the composition of an operation $(-)^{**}$ from a category of complexes to $G(\A)$
and a ``total" functor from $G(\A)$ to a homotopy category of complexes . However, the operation $(-)^{**}$ is not a functor.  Recall that a homotopy relation on morphisms in $G(\A)$ is introduced in \cite[Definition 2.8]{Ric}. Even up to this homotopy relation, the operation $(-)^{**}$ is still not a functor.

Theorem \ref{Frob cat (1)} gives rise to a new homotopy relation on morphisms in $G(\A)$, which makes the operation $(-)^{**}$ a triangle functor up to homotopy. Consequently, the triangle functor in \cite[Proposition 2.11]{Ric} can be realized as the composition of two triangle functors; see Proposition \ref{realization}.

Indeed, we observe that $G(\A)$ is isomorphic to $C(\B)$ for some additive category $\B$, which is endowed with a natural transformation $\eta\colon (1) \to \Id_{\B}$; see Proposition \ref{C(grA) and G(A)}.  Then by Theorem \ref{Frob cat (1)}, $G(\A)$ admits a Frobenius exact structure associated to $\eta$; moreover, we obtain a new homotopy relation on morphisms in $G(\A)$. Using this new homotopy relation, the operation $(-)^{**}$  induces a triangle functor.

This paper is organized as follows. In Section 2, we recall some basic facts on Frobeinus categories. Section 3 deals with
the construction of the new Frobenius exact structure on the category of complexes; see Theorem \ref{Frob cat (1)}.
In Section 4, we show that the category $G(\A)$ is isomorphic to a category of complexes in an additive category $\B$ endowed with a natural transformation $\eta\colon (1)\to \Id_{\B}$ and give the new homotopy relation on morphisms by Theorem \ref{Frob cat (1)}. Finally, we realize
the triangle functor in \cite[Proposition 2.11]{Ric} as the composition of two triangle functors in Theorem \ref{realization}.

Throughout this paper, all functors are additive functors.

\section{Preliminaries on Frobenius categories}

In this section we recall some basic facts on exact categories
and Frobenius categories; see \cite[Appendix A]{Ke3} for details.

Let $\mathcal{C}$ be an additive category. An \emph{exact pair} of morphisms
is a sequence $X\xr{i} Y \xr{p} Z$, which satisfies that $i=\Ker
p$ and $p=\Coker i$. Two exact pairs $(i, p)$ and $(i', p')$
are said to be \emph{isomorphic} provided that there exist
isomorphisms $f\colon X\to X'$, $g\colon Y\to Y'$ and $h\colon Z\to
Z'$ such that the following diagram commutes.
\[\begin{CD}
X @>i>> & Y @>p>> & Z\\
@VVfV & @VVgV &@VVhV\\
X' @>i'>> & Y' @>p'>> & Z'.
\end{CD}\]

Recall that an \emph{exact structure} on an additive category $\mathcal{C}$
is a chosen class $\E$ of exact pairs in $\mathcal{C}$, which is closed under isomorphisms and satisfies the
following axioms (Ex0), (Ex1), (Ex1)$^\op$, (Ex2) and (Ex2)$^\op$. An exact pair $(i, p)$ in the class $\E$ is called a \emph{conflation},
$i$ is called
an \emph{inflation} and $p$ is called a \emph{deflation}. The pair
 $(\mathcal{C}, \E)$ (or $\mathcal{C}$ for short) is called an \emph{exact category}.
 The axioms of exact categories are listed as follows:

 \begin{tabular}{ll}
(Ex0)&  the identity morphism of the zero object is a deflation;\\
(Ex1)& the composition of two deflations is a deflation;\\
(Ex1)$^\op$ & the composition of two inflations is an inflation;\\
(Ex2)& for a deflation $p\colon Y\to Z$ and a morphism $f\colon Z'\to Z$,
there exists a \\
& pullback diagram such
 that $p'$ is a deflation:
\end{tabular}
\[\xymatrix{
Y' \ar@{.>}[r]^{p'}\ar@{.>}[d]_{f'} & Z'\ar[d]^{f}\\
Y \ar[r]^p & Z.}\]
 \begin{tabular}{ll}
(Ex2)$^\op$ & for an inflation $i\colon X\to Y$ and a morphism $f\colon X\to X'$,
there exists a\\
& pushout diagram such
 that $i'$ is an inflation:
\end{tabular}
\[\xymatrix{
X \ar[r]^{i}\ar[d]_{f} & Y\ar@{.>}[d]^{f'}\\
X' \ar@{.>}[r]^{i'} & Y'.}\]

Let $P$ be an object in $\mathcal{C}$. If for any deflation $p \colon Y\to Z$ and any
morphism $f\colon P\to Z$, there exists a morphism $f'\colon
P\to Y$ such that $f=pf'$, then we call that $P$ is a
\emph{projective object} in the exact category $\mathcal{C}$. For any object $X$
in $\mathcal{C}$, if there exists a deflation $p \colon P\to X$ with $P$
projective, then we say that exact category $\mathcal{C}$ \emph{has enough
projective objects}. Dually, one can define \emph{injective objects}
and \emph{having enough injective objects} for $\mathcal{C}$. An exact category $\mathcal{C}$ is said to be \emph{Frobenius} provided that it has enough projective and enough injective objects, and the class of projective objects coincides with the class of injective objects. By a \emph{Frobenius exact structure} on an additive category $\C$, we mean a chosen class $\E$ such that
$(\C, \E)$ is a Frobenius category.

In what follows, we recall the stable category $\underline{\mathcal{C}}$ of a Frobenius category $\mathcal{C}$. The objects
of $\underline{\C}$ are the same as the ones in $\mathcal{C}$,  and the set of morphisms are defined as
\[\Hom_{\underline{\mathcal{C}}}(X, Y)=\Hom_{\mathcal{C}}(X, Y)/I(X, Y),\]
where $I(X, Y)=\{f\in \Hom_{\mathcal{C}}(X, Y)\mid f\,\,\mbox{factors through
some injective object}\}$ is the subgroup of $\Hom_{\mathcal{C}}(X, Y)$,
and the composition is defined naturally. For any object $X$,
we fix a conflation $X\xr{i_X} I(X) \xr{p_X} S(X)$, where $I(X)$ is
an injective object. For any morphism $f\colon X\to Y$, there exists a morphism $S(f)\colon S(X)\to S(Y)$ such that the following diagram commutes
\[\xymatrix{
X \ar[r]^{i_X}\ar[d]_{f} & I(X)\ar[r]^{p_X}\ar@{.>}[d] & S(X)\ar@{.>}[d]^{S(f)}\\
Y \ar[r]^{i_Y} & I(Y) \ar[r]^{p_Y} & S(Y).}\]
Here, the existence the middle morphism is deduced from the injectivity of $I(Y)$.
Note that $S(f)$ is not unique, but it is uniquely determined by
$f$ in the stable category $\underline{\mathcal{C}}$. This gives rise to
the \emph{suspension functor}  $S\colon \underline{\mathcal{C}} \to \underline{\mathcal{C}}$ ;
it is an auto-equivalence.

By \cite[Theorem I.2.8]{Ha1}, the stable category  $\underline{\C}$ of a Frobenius category $\C$ is a triangluated category, where
the translation functor is given by $S$. The triangles in $\underline{\C}$ are induced by conflations.

A classical example of Frobenius categories is the  category of complexes in an additive
category. Let $\B$ be an additive category, and let $\C(\B)$ be the category of complexes in $\B$.
The shift functor $[1]$ on $\C(\B)$ is defined as follows:
for any complex $X^\bu$ in $\C(\B)$,
the complex $X^\bu[1]$ is given by $(X^\bu[1])^n=X^{n+1}$
and $d_{X[1]}^n=-d_X^{n+1}$; for any chain map $f^\bu\colon X^\bu\to Y^\bu$, the morphism $f^\bu[1]$ satisfies
$(f^\bu[1])^n=f^{n+1}$. Then $[1]$ is an automorphism of $\C(\B)$.
Let $f^\bu\colon X^\bu \to Y^\bu$ be a morphism. The \emph{mapping cone} $\cone(f^\bu)$
of $f^\bu$ is a complex defined by
$(\cone(f^\bu))^n=X^{n+1}\oplus Y^n$, and the differential
$d_{\cone(f^\bu)}^n=\begin{pmatrix}
-d_{X}^{n+1} & 0\\
f^{n+1} & d_Y^n
\end{pmatrix}$.
The category $\C(\B)$ has a well-known exact structure $\E$ such that
$(\C(\B), \E)$ is a Frobenius category, where $\E$ consists of
all chainwise split short exact sequences of complexes.

For a chain map $f^\bu \colon X^\bu \to Y^\bu$,
the sequence
\[Y^\bu \xr{\left(0\atop 1\right)} \cone(f^\bu) \xr{(1\ 0)} X^\bu[1]\]
is called a \emph{standard exact pair} associated to $f^\bu$.
In fact, each exact pair in $\E$ is
isomorphic to some standard exact pair. Suppose that $X^\bu\xr{i^\bu} Y^\bu\xr{p^\bu} Z^\bu$
is a conflation. Up to isomorphism, we can assume that
$Y^n=Z^n\oplus X^n$, and $i^n=\left(0\atop 1\right)$,
$p^n=(1\ 0)$. It follows that
$d_Y^n=\begin{pmatrix}
d_Z^n & 0\\
h^{n+1} & d_X^n
\end{pmatrix}$,
and $h^\bu\colon Z^\bu[-1] \to X^\bu$ is a chain map.
Then the exact pair $(i^\bu, p^\bu)$ is isomorphic to
the standard exact pair associated to $h^\bu$, and the class of
the chain map $h^\bu$ is unique up to homotopy and the homotopy class of $h^\bu$ is called a \emph{homotopy invariant} of $(i^\bu, p^\bu)$; see \cite[Definition 4.7]{Ive}.

\section{A new Frobenius exact structure on the category of complexes}

In this section, we construct a new Frobenius exact structure on the category of complexes,
and then obtain a new triangulated category.

Let us fix the following notation.
Let $\B$ be an additive category and $\C(\B)$ the category of
complexes in $\B$. The category $\C(\B)$ has a well-known exact structure $\E$ such that
$(\C(\B), \E)$ is a Frobenius category, where $\E$ consists of
all chainwise split short exact sequences of complexes.

 Suppose that $(1)$ is an automorphism endofunctor on $\B$ and
$\eta\colon (1) \to \Id_{\B}$ is
 a natural transformation satisfying $\eta_{X(1)}=\eta_{X}(1)$ for any object $X$ in $\B$,
 where $\Id_{\B}$ is the identity functor on $\B$.
 The inverse functor of $(1)$ is denoted by $(-1)$.

The automorphism $(1)$ and the natural transformation $\eta$
induce the ones on $\C(\B)$, which
are still denoted by $(1)$ and $\eta$.
Here are some elementary properties of the automorphism $(1)\colon C(\B)\to C(\B)$ and the natural transformation $\eta\colon (1) \to \Id_{C(\B)}$.

\begin{lem}\label{prop of eta}
For any objects $X^\bu$, $Y^\bu$ and any morphism $f^\bu\colon X^\bu\to Y^\bu$ in $\C(\B)$, we have

\item[(i)] $(1)[1]=[1](1)$;

\item[(ii)] $ \eta_{X^\bu(1)}=\eta_{X^\bu}(1)$, $ \eta_{X^\bu[1]}=\eta_{X^\bu}[1]$;

\item[(iii)]  $\cone(f^\bu(1))=\cone(f^\bu)(1)$;

\item[(iv)] $\eta_{\cone(f^\bu)}=
\begin{pmatrix}
\eta_{X^\bu[1]}& 0 \\
    0& \eta_{Y^\bu}
\end{pmatrix}$.\\

\end{lem}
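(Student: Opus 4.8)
The plan is to verify all four identities \emph{degreewise}, reducing each to the corresponding statement in $\B$ together with the additivity of $(1)$ and the naturality of $\eta$. Recall that $(1)$ and $\eta$ act on $\C(\B)$ componentwise: for a complex $X^\bu$ one has $(X^\bu(1))^n=X^n(1)$ with differential $d_{X(1)}^n=d_X^n(1)$, that $(f^\bu(1))^n=f^n(1)$ for a chain map $f^\bu$, and that $(\eta_{X^\bu})^n=\eta_{X^n}$. Everything then comes down to comparing these expressions in each degree.

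For (i), I would compare the two functors $X^\bu(1)[1]$ and $X^\bu[1](1)$ in each degree: both equal $X^{n+1}(1)$ in degree $n$, and their differentials agree since $-d_X^{n+1}(1)=(-d_X^{n+1})(1)$ by additivity of $(1)$. The same degreewise check handles morphisms, since $(f^\bu(1)[1])^n=f^{n+1}(1)=(f^\bu[1](1))^n$. For the first identity in (ii), the $n$-th component of $\eta_{X^\bu(1)}$ is $\eta_{X^n(1)}$ while that of $\eta_{X^\bu}(1)$ is $\eta_{X^n}(1)$; these coincide by the standing hypothesis $\eta_{X(1)}=\eta_X(1)$ applied to $X=X^n$. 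For the second identity in (ii), both $\eta_{X^\bu[1]}$ and $\eta_{X^\bu}[1]$ have $n$-th component $\eta_{X^{n+1}}$, so they are equal.

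For (iii), the $n$-th term of $\cone(f^\bu(1))$ is $X^{n+1}(1)\oplus Y^n(1)$, which coincides with the $n$-th term $(X^{n+1}\oplus Y^n)(1)$ of $\cone(f^\bu)(1)$ because $(1)$ is additive; comparing differentials, applying $(1)$ entrywise to the matrix defining $d_{\cone(f^\bu)}^n$ produces exactly the matrix defining $d_{\cone(f^\bu(1))}^n$, again by additivity of $(1)$ (which in particular carries $-d_X^{n+1}$ to $-d_X^{n+1}(1)$). Finally, for (iv) the $n$-th component of $\eta_{\cone(f^\bu)}$ is $\eta_{X^{n+1}\oplus Y^n}$, and by (ii) the asserted right-hand side has $n$-th component the diagonal matrix with entries $\eta_{X^{n+1}}$ and $\eta_{Y^n}$; so (iv) amounts to the claim that $\eta$ is diagonal on the biproduct $X^{n+1}\oplus Y^n$.

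The only point requiring more than bookkeeping, and hence the main (if mild) obstacle, is this last claim in (iv). I would deduce it from the fact that $\eta$ is a natural transformation between the additive functors $(1)$ and $\Id_\B$: since both functors preserve biproducts, naturality of $\eta$ against the canonical inclusions and projections of $X^{n+1}\oplus Y^n$ forces $\eta_{X^{n+1}\oplus Y^n}=\eta_{X^{n+1}}\oplus\eta_{Y^n}$, which is precisely the diagonal matrix above. Assembling the degreewise identifications then yields the four stated equalities.
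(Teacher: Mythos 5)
Your proposal is correct and follows essentially the same route as the paper, which likewise works degreewise and only spells out (iv), declaring (i)--(iii) straightforward. In fact you supply slightly more detail than the paper does: the diagonal form $\eta_{X^{n+1}\oplus Y^n}=\eta_{X^{n+1}}\oplus\eta_{Y^n}$, which the paper asserts ``by definition,'' is exactly the naturality-against-inclusions-and-projections argument you give (valid here since the paper declares all functors additive).
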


\begin{proof} We give the proof only for (iv); the others are straightforward.

Recall that for any morphism $f^\bu \colon X^\bu \to Y^\bu$,
$(\cone(f^\bu))^{n}=X^{n+1}\oplus Y^n$, and
$d_{\cone(f^\bu)}^n=\begin{pmatrix} -d_X^{n+1} & 0 \\ f^{n+1} & d_Y^n
\end{pmatrix}$. By definition, $\eta_{\cone(f^\bu)}\colon \cone(f^\bu)(1)\to \cone(f^\bu)$ is a chain map such that
each component
$(\eta_{\cone(f^\bu)})^n\colon X^{n+1}(1)\oplus Y^n(1)\to X^{n+1}\oplus Y^n$ is given by
 $\begin{pmatrix}
\eta_{X^{n+1}} & 0\\
0 & \eta_{Y^n}
\end{pmatrix}$. This proves (iv).
\end{proof}

\begin{Def}\label{1-ex pair}
(1) Suppose that $f^\bu \colon X^\bu \to
Y^\bu$ is a chain map. If $f^\bu$ factors through
$\eta_{Y^\bu}\colon Y^\bu(1) \to Y^\bu$, then
\[Y^\bu \xr{\left(0\atop 1\right)} \cone(f^\bu) \xr{(1\ 0)} X[1]^\bu\]
is called a \emph{standard $\eta$-conflation}.

(2) We denote by $\E_\eta$ the subclass  of $\E$ consisting of all exact pairs that are isomorphic to some standard $\eta$-conflation.
An exact pair $(i^\bu, p^\bu)$ in $\E_\eta$ is called an \emph{$\eta$-conflation}, where $i^\bu$ is called an
\emph{$\eta$-inflation}, and $p^\bu$ is called an
\emph{$\eta$-deflation}.
\end{Def}

\begin{rmk}
The chain map $f^\bu \colon X^\bu \to Y^\bu$ factors
through $\eta_{Y^\bu}$ if and only if $f^\bu$ factors through
$\eta_{X^\bu(-1)}$ since $\eta\colon (1)\to \Id_{C(\B)}$ is a natural transformation.
\end{rmk}

\begin{lem}\label{exact structures}
The complex category $\C(\B)$ together with the class $\E_\eta$ is
an exact category.
\end{lem}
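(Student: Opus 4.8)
The plan is to verify the five axioms (Ex0), (Ex1), (Ex1)$^\op$, (Ex2) and (Ex2)$^\op$ for the class $\E_\eta$, taking advantage of the fact that $(\C(\B),\E)$ is already an exact category and that $\E_\eta\subseteq\E$. Consequently the pullbacks, pushouts and composites demanded by the axioms already exist in $\E$ and are automatically exact pairs, deflations or inflations \emph{in} $\E$; the entire task is to show that they again belong to the subclass $\E_\eta$. Since $\E_\eta$ is closed under isomorphism by Definition \ref{1-ex pair}, I may always reduce a conflation to the standard form $X^\bu\xr{i^\bu}Y^\bu\xr{p^\bu}Z^\bu$ with $Y^n=Z^n\oplus X^n$, $i^n=\binom{0}{1}$ and $p^n=(1\ 0)$, and read off its homotopy invariant $h^\bu\colon Z^\bu[-1]\to X^\bu$. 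Using that this invariant is well defined up to homotopy, one checks that such a conflation lies in $\E_\eta$ precisely when $h^\bu$ is homotopic to a chain map of the form $\eta_{X^\bu}\circ k^\bu$. The proof then reduces to tracking this ``factoring through $\eta$'' property through each of the axioms.

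The axiom (Ex0) is immediate, since the zero map factors through $\eta_0$. For (Ex2) and (Ex2)$^\op$ I would use that, up to homotopy, the homotopy invariant of the pullback of a deflation along $g^\bu\colon W^\bu\to Z^\bu$ is $h^\bu\circ g^\bu[-1]$, while that of the pushout of an inflation along $f^\bu\colon X^\bu\to V^\bu$ is $f^\bu\circ h^\bu$. If $h^\bu=\eta_{X^\bu}\circ k^\bu$, then $h^\bu\circ g^\bu[-1]=\eta_{X^\bu}\circ(k^\bu\circ g^\bu[-1])$ still factors through $\eta_{X^\bu}$, which gives (Ex2); and by naturality of $\eta$ one has $f^\bu\circ h^\bu=f^\bu\circ\eta_{X^\bu}\circ k^\bu=\eta_{V^\bu}\circ f^\bu(1)\circ k^\bu$, which factors through $\eta_{V^\bu}$ and gives (Ex2)$^\op$. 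Equivalently, these two computations say that the rule sending $(Z^\bu,X^\bu)$ to the set of morphisms in $K(\B)$ from $Z^\bu$ to $X^\bu[1]$ that factor through $\eta_{X^\bu}[1]$ is a subbifunctor of $\Ext^1$ closed under pullback and pushout; only the covariant (pushout) part requires naturality of $\eta$.

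The genuine obstacle is (Ex1) (and, dually, (Ex1)$^\op$): the composite of two $\eta$-deflations must again be one. Given composable deflations $B_2^\bu\xr{q^\bu}B_1^\bu\xr{p^\bu}C^\bu$ arising from $\eta$-conflations, I would put them in compatible standard form, writing $B_1^\bu=C^\bu\oplus A_1^\bu$ and $B_2^\bu=B_1^\bu\oplus A_2^\bu$, with invariants $h_1^\bu\colon C^\bu[-1]\to A_1^\bu$ and $h_2^\bu=\big((h_2)_1^\bu\ \ (h_2)_2^\bu\big)\colon B_1^\bu[-1]\to A_2^\bu$. A direct matrix computation then identifies the kernel of $p^\bu q^\bu$ with $A_{12}^\bu=A_1^\bu\oplus A_2^\bu$ (carrying the invariant $(h_2)_2^\bu$) and shows that the composite conflation $A_{12}^\bu\to B_2^\bu\xrightarrow{p^\bu q^\bu}C^\bu$ has homotopy invariant $h^\bu=\binom{h_1^\bu}{(h_2)_1^\bu}$. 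Writing the two hypotheses as $h_1^\bu=\eta_{A_1^\bu}k_1^\bu$ and $h_2^\bu=\eta_{A_2^\bu}k_2^\bu$, so that $(h_2)_1^\bu=\eta_{A_2^\bu}(k_2)_1^\bu$, and using $\eta_{A_{12}^\bu}=\begin{pmatrix}\eta_{A_1^\bu}&0\\0&\eta_{A_2^\bu}\end{pmatrix}$ from Lemma \ref{prop of eta}, the only reasonable candidate for a factoring of $h^\bu$ through $\eta_{A_{12}^\bu}$ is $k^\bu=\binom{k_1^\bu}{(k_2)_1^\bu}$, which visibly satisfies $\eta_{A_{12}^\bu}k^\bu=h^\bu$.

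The delicate point, and the heart of the lemma, is that the component $(k_2)_1^\bu$ of the chain map $k_2^\bu$ is itself \emph{not} a chain map, so the chain-map property of $k^\bu$ is not automatic; this is exactly where I expect the hypothesis $\eta_{X(1)}=\eta_X(1)$ to be indispensable. Checking the identity $d\,k^\bu=k^\bu d$ degreewise, the diagonal blocks reduce to the chain-map identities already available for $k_1^\bu$ and for the component $(k_2)_1^\bu$, and the single off-diagonal term reduces to the degreewise identity $(h_2)_2(1)\,k_1=(k_2)_2\,h_1$. Substituting $h_1=\eta_{A_1}k_1$ and $(h_2)_2=\eta_{A_2}(k_2)_2$ and applying naturality of $\eta$ to $(k_2)_2$ rewrites the right-hand side as $\eta_{A_2}(1)\,(k_2)_2(1)\,k_1$; it is precisely the hypothesis $\eta_{X(1)}=\eta_X(1)$ that identifies $\eta_{A_2(1)}$ with $\eta_{A_2}(1)$ and forces the two sides to agree. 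Hence $k^\bu$ is a genuine chain map, $h^\bu$ factors through $\eta_{A_{12}^\bu}$, and the composite is an $\eta$-conflation. The axiom (Ex1)$^\op$ follows from the dual computation, in which the composite invariant is the row $\big((h_2)_2^\bu\ \ h_1^\bu\big)$ and the same appeal to $\eta_{X(1)}=\eta_X(1)$ shows that the assembled factoring map is a chain map.
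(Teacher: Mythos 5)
Your proof is correct and takes essentially the same route as the paper's: reduce each $\eta$-deflation to standard form, identify the composite's homotopy invariant as the column $\left({h_1^\bu}\atop{(h_2)_1^\bu}\right)$ into the cone-shaped kernel, assemble the factoring map $\left({k_1^\bu}\atop{(k_2)_1^\bu}\right)$ via the diagonal description of $\eta_{\cone}$ from Lemma \ref{prop of eta}(iv), prove (Ex2) by the explicit cone pullback with invariant $h^\bu g^\bu[-1]$, and dispose of the $^{\op}$ axioms by naturality of $\eta$. The only difference is one of detail: where the paper simply writes ``Observe that $\left({\alpha^\bu}\atop{\beta_1^\bu}\right)\colon Z^\bu[-1]\to\cone(g_2^\bu)(1)$ is a chain map,'' you carry out that verification explicitly and correctly pinpoint that it is exactly the hypothesis $\eta_{X(1)}=\eta_X(1)$, combined with naturality, that makes the off-diagonal identity close.
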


\begin{proof} The axiom (Ex0) is trivial. To show (Ex1), we assume
that $Y^\bu \xr{(1, 0)} Z^\bu$ and $W^\bu\xr{(1, 0)} Y^\bu$ are
$\eta$-deflations, where $Y^\bu=\cone(f^\bu)$ with the homotopy invariant $f^\bu\colon
Z^\bu[-1] \to X^\bu$, and $W^\bu=\cone(g^\bu)$ with the homotopy invariant  $g^\bu=(g_1^\bu,
g_2^\bu)\colon Y^\bu[-1] \to V^\bu$.
By simple calculation, we have that $g^\bu_2\colon  X^\bu[-1]
\to V^\bu$ and $\left({f^\bu}\atop{g^\bu_1}\right)\colon
Z^\bu[-1] \to \cone(g^\bu_2)$ are chain maps, and
$W=\cone(\left({f^\bu}\atop{g^\bu_1}\right))$.

By definition, we assume that $f^\bu=\eta_{X^\bu}\alpha^\bu$ and $g^\bu=\eta_{V^\bu}\beta^\bu$,
where $\alpha^\bu: Z^\bu[-1]\to X^\bu(1)$ and
$\beta^\bu=(\beta_1^\bu, \beta_2^\bu): Y^\bu[-1]\to V^\bu(1)$ are chain maps. Observe that
$\left({\alpha^\bu}\atop{\beta^\bu_1}\right)\colon Z^\bu[-1] \to
\cone(g^\bu_2)(1)$  is a chain map.
By Lemma \ref{prop of eta} (iv), we have
\[\begin{pmatrix} f^\bu \\ g^\bu_1 \end{pmatrix}
=\begin{pmatrix} \eta_{X^\bu} &0 \\ 0 & \eta_{V^\bu} \end{pmatrix}
\begin{pmatrix} \alpha^\bu \\ \beta^\bu_1 \end{pmatrix}
=\eta_{\cone(g_2^\bu)}\begin{pmatrix} \alpha^\bu \\ \beta^\bu_1
\end{pmatrix}.\]
It follows that the chain map
$\left({f^\bu}\atop{g^\bu_1}\right)\colon Z^\bu[-1] \to
\cone(g^\bu_2)$ factors through $\eta_{\cone(g^\bu_2)}$. Therefore, the composition $W^\bu\to Y^\bu $ and $Y^\bu \to Z^\bu$ is an $\eta$-deflation and
then (Ex1) holds.

To show (Ex2), for any chain map $h^\bu\colon
Z'^\bu\to Z^\bu$, we have the following pullback diagram
\[\xymatrix{
\cone(f^\bu h^\bu[-1]) \ar@{.>}[rr]^(0.6){(1,0)} \ar@{.>}[d]_{
{\footnotesize\begin{pmatrix}
h^\bu  & 0\\
0 & \Id_{X^\bu}
\end{pmatrix}}} && Z'^\bu\ar[d]^{ h^\bu}\\
Y^\bu \ar[rr]_{(1,0)} && Z^\bu ,}\] where $Y^\bu=\cone(f^\bu)$ with the homotopy invariant
$f^\bu\colon Z^\bu[-1] \to X^\bu$. Since $f^\bu h^\bu[-1]=\eta_{X^\bu}
\alpha^\bu h^\bu[-1]$ factors through $\eta_{X^\bu}$, we immediately have that (Ex2) hold.

Dually, (Ex1)$^\op$ and (Ex2)$^\op$ hold since the chain map $f^\bu \colon X^\bu \to Y^\bu$ factors through
$\eta_{Y^\bu}$ if and only if $f^\bu$ factors through
$\eta_{X^\bu(-1)}$. This finishes the proof.
\end{proof}

\begin{lem}\label{proj and inj obj} Let $V^\bu$ be an object
in $C(\B)$. Then the mapping cone $\cone(\eta_{V^\bu})$ is both
projective and injective in $(\C(\B), \E_\eta)$.
\end{lem}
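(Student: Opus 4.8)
The plan is to show directly that $\cone(\eta_{V^\bu})$ is projective; the injectivity then follows by the dual argument, using the equivalence (recorded in the Remark) that a chain map factors through $\eta_{Y^\bu}$ if and only if it factors through $\eta_{X^\bu(-1)}$. First I would recall the meaning of projectivity in $(\C(\B),\E_\eta)$: given any $\eta$-deflation $p^\bu\colon W^\bu\to Z^\bu$ and any chain map $\vp^\bu\colon \cone(\eta_{V^\bu})\to Z^\bu$, I must lift $\vp^\bu$ through $p^\bu$. Up to isomorphism I may take $p^\bu$ to be a standard $\eta$-deflation, so $W^\bu=\cone(g^\bu)$ with homotopy invariant $g^\bu\colon Z^\bu[-1]\to X^\bu$ factoring as $g^\bu=\eta_{X^\bu}\beta^\bu$, and $p^\bu=(1\ 0)\colon \cone(g^\bu)\to Z^\bu$.

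The key observation is that $\cone(\eta_{V^\bu})$ is itself the cone of a map factoring through $\eta_{V^\bu}$, so it carries a \emph{universal null-homotopy} that makes maps out of it easy to split off. Concretely, by Lemma \ref{prop of eta}(iv) the identity chain map $\id_{\cone(\eta_{V^\bu})}$ sits in a standard $\eta$-conflation, and the standard exact pair
\[
V^\bu \xr{\left(0\atop 1\right)} \cone(\eta_{V^\bu}) \xr{(1\ 0)} V^\bu(1)[1]
\]
is itself an $\eta$-conflation whose deflation is split once we work modulo the homotopy supplied by $\eta$. I would exploit this by writing $\vp^\bu$ on the two summands $V^\bu(1)[1]$ and $V^\bu$ of $\cone(\eta_{V^\bu})$ and lifting each piece. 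Because $p^\bu=(1\ 0)$ is a \emph{chainwise} split epimorphism (it is split in $\E$, with a graded section $\left(0\atop 1\right)$), there is always a graded lift $\psi^\bu$ with $p^\bu\psi^\bu=\vp^\bu$; the only issue is that $\psi^\bu$ need not be a chain map. The deviation $d_{W}\psi^\bu-\psi^\bu d_{\cone(\eta_{V^\bu})}$ lands in $\Ker p^\bu=X^\bu$ (via the inflation), and I would show that this deviation, transported along the structure map $\eta$, can be absorbed by modifying $\psi^\bu$ using the factorization $g^\bu=\eta_{X^\bu}\beta^\bu$ together with the compatibility $\eta_{X^\bu(1)}=\eta_{X^\bu}(1)$. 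This is precisely where the hypothesis on $\eta$ and the shape of $\cone(\eta_{V^\bu})$ interact, and it is the step I expect to be the main obstacle.

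The decisive computation is to check that the correction term defined from $\beta^\bu$ and the components of $\psi^\bu$ is again a \emph{chain map} and restores commutativity of $d$ with $\psi^\bu$; this amounts to one diagram chase of $2\times 2$ block matrices, using the differential $d_{\cone(\eta_{V^\bu})}=\left(\begin{smallmatrix} -d_{V[1]}^{\phantom{n}} & 0\\ \eta_{V}^{\phantom{n}} & d_{V}^{\phantom{n}}\end{smallmatrix}\right)$ from Lemma \ref{prop of eta}(iv) and the naturality square for $\eta$. Once the lift through a standard $\eta$-deflation is produced, projectivity against an arbitrary $\eta$-deflation follows since $\E_\eta$ is closed under isomorphism (Definition \ref{1-ex pair}). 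Injectivity is then obtained by dualizing: an $\eta$-inflation is, up to isomorphism, the inclusion $\left(0\atop 1\right)$ into a cone, and the same splitting-plus-correction argument applies, now using the ``$(-1)$'' form of the factorization from the Remark. Thus $\cone(\eta_{V^\bu})$ is both projective and injective in $(\C(\B),\E_\eta)$.
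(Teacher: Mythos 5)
Your plan is correct and takes essentially the same route as the paper: the paper likewise reduces to a standard $\eta$-deflation $(1\ 0)\colon \cone(f^\bu)\to Z^\bu$ with $f^\bu=\eta_{X^\bu}\alpha^\bu$ and realizes your ``graded lift plus correction'' as the explicit chain map $\begin{pmatrix} g_1^\bu & g_2^\bu\\ 0 & \alpha^\bu(-1)g_1^\bu[-1](-1)\end{pmatrix}$, verifying it by exactly the block-matrix chase you anticipate, where the decisive identity $g_1^n(-1)\eta_{V^{n+1}}=\eta_{Z^n(-1)}g_1^n$ comes from naturality of $\eta$ applied to the whole chain map $(g_1^\bu,g_2^\bu)$ (note $g_1^\bu$ alone is not a chain map). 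Injectivity is then settled by the dual explicit matrix, just as you propose via the factor-through-$\eta_{X^\bu(-1)}$ remark.
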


\begin{proof} By definition, we have
\begin{center}
$(\cone(\eta_{V^\bu}))^{n}=V^{n+1}(1)\oplus V^{n}$, and
$d_{\cone(\eta_{V^\bu})}^n=\begin{pmatrix}
-d_{V}^{n+1}(1) & 0\\
\eta_{V^{n+1}} & d_{V}^{n}
\end{pmatrix}$.
\end{center}
Let $Y^\bu \xr{(1\ 0)} Z^\bu$
be any $\eta$-deflation in $\E_\eta$, where $Y^\bu=\cone(f^\bu)$ with the homotopy invariant
$f^\bu\colon Z^\bu[-1] \to X^\bu$. By definition, we assume that $f^\bu=\eta_{X^\bu}\al^\bu$,
 where $\al^\bu\colon  Z^\bu[-1]\to X^\bu(1)$ is a chain map.

To show that $\cone(\eta_{V^\bu})$ is a projective object,
it suffice to prove that $Y^\bu \xr{(1\ 0)} Z^\bu$ factors through any chain map $(g_1^\bu,g_2^\bu)\colon \cone(\eta_{V^\bu}) \to Z^\bu$.

Indeed, we have the following commutative diagram
\[\xymatrix{
 & \cone(\eta_{V^\bu})\ar[d]^{(g_1^\bu, g_2^\bu)} \ar@{.>}[dl]_(.7){
 {\footnotesize \begin{pmatrix}
 g_1^\bu & g_2^\bu\\
 0   & \alpha^\bu(-1)g_1^\bu[-1](-1)
 \end{pmatrix}}
 }\\
 Y^\bu \ar[r]^{(1,0)} & Z^\bu,}\]
and it remains to check that$\begin{pmatrix}
 g_1^\bu & g_2^\bu\\
 0   & \alpha^\bu(-1)g_1^\bu[-1](-1)
 \end{pmatrix}:\cone(\eta_{V^\bu}) \to Y^\bu$ is a chain map.
 Since $(g^\bu_1, g^\bu_2)$ is a chain map,  we have
\begin{align}
g_2^{n+1}\eta_{V^{n+1}}&=g_1^{n+1}d_V^{n+1}(1)+d_Z^ng_1^n, \label{comm-1}\\
g_2^{n+1}d_V^n&=d_Z^ng_2^n \label{comm-2}
\end{align}
for each $n\in \Z$.
Notice that $g_2^\bu$ is a chain map, but $g_1^\bu$ may be not a chain
map. By the following commutative diagram
\[\xymatrix{
\cone(\eta_{V^\bu}) \ar[rr]^{(g_1^\bu,g_2^\bu)}\ar[d]_{\footnotesize
\begin{pmatrix}
\eta_{V^\bu}& 0\\
0 & \eta_{V^\bu(-1)}
\end{pmatrix}} && Z^\bu\ar[d]^{\eta_{Z^\bu(-1)}}\\
\cone(\eta_{V^\bu})(-1) \ar[rr]_{(g_1^\bu(-1),g_2^\bu(-1))} &&
Z^\bu(-1),}\]
we have $g_1^n(-1)\eta_{V^{n+1}}=\eta_{Z^n(-1)}g_1^n$.
Therefore,
\begin{align}\label{comm-3}
\alpha^{n+1}(-1)g_1^n(-1)\eta_{V^{n+1}}
=\alpha^{n+1}(-1)\eta_{Z^n(-1)}g_1^n=\eta_{X^{n+1}}\alpha^{n+1}g_1^n=f^{n+1}g_1^n,
\end{align}
where the second equality is deduced from the naturality of $\eta$
since $\alpha^\bu\colon Z^\bu[-1]\to X^\bu(1)$ is a chain map. On the other hand,
we have
\begin{align}\label{comm-4}
\begin{split}
& d_X^n\alpha^n(-1)g_1^{n-1}(-1)+f^{n+1}g_2^n\\
=& \alpha^{n+1}(-1)(-d_Z^{n-1}(-1))g_1^{n-1}(-1)+\eta_{X^{n+1}}\alpha^{n+1}g_2^n\\
=& \alpha^{n+1}(-1)(-d_Z^{n-1}(-1))g_1^{n-1}(-1)+\alpha^{n+1}(-1)\eta_{Z^n(-1)}g_2^n\\
=& \alpha^{n+1}(-1)(-d_Z^{n-1}(-1))g_1^{n-1}(-1)+\alpha^{n+1}(-1)g_2^n(-1)\eta_{V^n(-1)}\\
=& \alpha^{n+1}(-1)\left(-d_Z^{n-1}(-1)g_1^{n-1}(-1)+g_2^n(-1)\eta_{V^n(-1)}\right)\\
=& \alpha^{n+1}(-1)g_1^n(-1)d_V^n,
\end{split}
\end{align}
where the second and the third
equalities are deduced from the naturality of $\eta$, and the fifth equality is a consequence of the equality
\eqref{comm-1}.
From the equalities \eqref{comm-2}, \eqref{comm-3} and
\eqref{comm-4}, it follows that $\begin{pmatrix}
 g_1^\bu & g_2^\bu\\
 0   & \alpha^\bu(-1)g_1^\bu[-1](-1)
 \end{pmatrix}$ is a chain map, and then
$\cone(\eta_{V^\bu})$ is a projective object
in $(\C(\B), \E_\eta)$.

Let $X^\bu\xr{\left(0\atop 1\right)} Y^\bu$ be any $\eta$-inflation,
where $Y^\bu=\cone(f^\bu)$ with $f^\bu: Z^\bu[-1]\to X^\bu$ factors
through $\beta^\bu: Z^\bu[-1]\to X^\bu(1)$. Dually, for any chain
map $\left({g_1^\bu}\atop{g_2^\bu}\right): X^\bu \to
\cone(\eta_{V^\bu})$, we have the following commutative diagram of
complexes
\[\xymatrix{
X^\bu \ar[rr]^{\left(0\atop
1\right)}\ar[d]_{\left({g_1^\bu}\atop{g_2^\bu}\right)} && Y^\bu
\ar@{.>}[dll]^(.3){{\footnotesize
\begin{pmatrix}
 g_2^\bu[1](1)\beta[1]  & g_1^\bu\\
  0 & g_2^\bu
\end{pmatrix}}}\\
\cone(\eta_{V^\bu}).}\]
Consequently, $\cone(\eta_{V^\bu})$ is also
an injective object.
\end{proof}

\begin{lem}\label{lem3.4}
Let $X^\bu \xr{i^\bu} Y^\bu \xr{p^\bu} Z^\bu$ be an exact pair in $\E$. Then the following statement are equivalent:

\noindent \emph{(1)} $(i^\bu, p^\bu)$ is an $\eta$-conflation;

\noindent \emph{(2)}  there exists a homotopy invariant $h^\bu \colon Z^\bullet[-1] \rightarrow X^\bu$  of $(i^\bu, p^\bu)$ which factors through $\eta_{X^\bu}$;

\noindent \emph{(3)} for any object $V^\bu$,
\[\Hom(\cone(\eta_{V^\bu}), Y^\bu)\xr{i^\ast} \Hom(\cone(\eta_{V^\bu}), Y^\bu)\xr{p^\ast} \Hom(\cone(\eta_{V^\bu}), Z^\bu)\]
 is short exact;

\noindent \emph{(4)} for any object $V^\bu$,
\[\Hom( Y^\bu, \cone(\eta_{V^\bu}))\xr{p_\ast} \Hom( Y^\bu, \cone(\eta_{V^\bu}))\xr{i_\ast} \Hom( X^\bu, \cone(\eta_{V^\bu}))\]
  is short exact.
\end{lem}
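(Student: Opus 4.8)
The plan is to establish the equivalences by proving the cycle $(1)\Leftrightarrow(2)$ first, and then $(1)\Leftrightarrow(3)$ and $(1)\Leftrightarrow(4)$ separately, exploiting the projective-injective objects supplied by Lemma \ref{proj and inj obj}. The equivalence $(1)\Leftrightarrow(2)$ is essentially a matter of unwinding definitions together with the homotopy-invariant machinery recalled in Section 2. Recall that, up to isomorphism, any exact pair in $\E$ is isomorphic to the standard exact pair associated to a chain map $h^\bu\colon Z^\bu[-1]\to X^\bu$, and the homotopy class of $h^\bu$ is a homotopy invariant of $(i^\bu,p^\bu)$. By Definition \ref{1-ex pair}, $(i^\bu,p^\bu)$ is an $\eta$-conflation precisely when it is isomorphic to a standard $\eta$-conflation, i.e.\ when the associated chain map factors through $\eta_{X^\bu}$. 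So first I would make explicit that condition $(1)$ says \emph{some} representative of the homotopy invariant factors through $\eta_{X^\bu}$, which is exactly $(2)$; the only subtlety is that factoring through $\eta_{X^\bu}$ must be shown to be a homotopy-invariant property, i.e.\ stable under adding a null-homotopic map, which follows because $\eta_{X^\bu}$ is a split epimorphism on components (as $(1)$ is an automorphism) so a chain homotopy $s^\bu$ can be lifted through $\eta$ componentwise.

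Next I would prove $(1)\Rightarrow(3)$. Since $\cone(\eta_{V^\bu})$ is projective in $(\C(\B),\E_\eta)$ by Lemma \ref{proj and inj obj}, applying $\Hom(\cone(\eta_{V^\bu}),-)$ to an $\eta$-conflation yields a short exact sequence of abelian groups: left-exactness is automatic because $i^\bu=\Ker p^\bu$, and surjectivity of $p^\ast$ is exactly the defining lifting property of a projective object against the $\eta$-deflation $p^\bu$. For the converse $(3)\Rightarrow(1)$, I would test condition $(3)$ against the natural choice $V^\bu=X^\bu$ (or a suitable shift), so that the identity-type element $\Id$ or the canonical inclusion of $\cone(\eta_{X^\bu})$ produces, via surjectivity of $p^\ast$, a section-like lift from which one reconstructs a homotopy invariant factoring through $\eta_{X^\bu}$; this reduces the converse to condition $(2)$. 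The dual statements $(1)\Leftrightarrow(4)$ follow by the same argument with ``projective'' replaced by ``injective'', using the injectivity half of Lemma \ref{proj and inj obj} and the remark that $f^\bu$ factors through $\eta_{Y^\bu}$ iff it factors through $\eta_{X^\bu(-1)}$, which is the left-right symmetry underlying the dualization.

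The main obstacle I anticipate is the converse directions $(3)\Rightarrow(1)$ and $(4)\Rightarrow(1)$: one must manufacture, from an abstract surjectivity statement about Hom-groups, an \emph{explicit} chain map witnessing that the homotopy invariant factors through $\eta_{X^\bu}$. The clean way to do this is to choose $V^\bu$ so that $\cone(\eta_{V^\bu})$ admits a canonical chain map to $Y^\bu$ whose composite with $p^\bu$ is a prescribed deflation; pulling back along the surjection $p^\ast$ then yields the required factorization. Making the bookkeeping of shifts $[1]$, $(1)$ and $(-1)$ consistent in these diagrams—so that the componentwise formulas genuinely assemble into chain maps—is where the real care is needed, but it is routine given Lemma \ref{prop of eta} and the computations already carried out in the proof of Lemma \ref{proj and inj obj}.
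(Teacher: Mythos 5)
Your overall plan coincides with the paper's proof: $(1)\Leftrightarrow(2)$ by unwinding Definition \ref{1-ex pair}, $(2)\Rightarrow(3)$ and $(2)\Rightarrow(4)$ from the projectivity/injectivity of $\cone(\eta_{V^\bu})$ established in Lemma \ref{proj and inj obj}, and the converses by testing against a single well-chosen $V^\bu$ and lifting a canonical map through the surjective $\Hom$-map. However, your treatment of $(1)\Leftrightarrow(2)$ contains a genuinely false step. You claim that ``factors through $\eta_{X^\bu}$'' is a homotopy-invariant property, justified by the assertion that $\eta_{X^\bu}$ is a split epimorphism on components ``as $(1)$ is an automorphism.'' This is wrong: $(1)$ being an automorphism of $\B$ imposes no condition whatsoever on the natural transformation $\eta$. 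The paper's own Example 3.9(2) takes $\eta=0$, in which case $\eta_{X^n}$ is not split epi for any nonzero $X^n$, and ``factors through $\eta$'' means ``equals $0$'' --- a property visibly \emph{not} stable under adding null-homotopic maps. The saving grace is that the step is unnecessary: statement $(2)$ asks only that \emph{some} representative of the homotopy invariant factor through $\eta_{X^\bu}$, and the factoring property transfers across an isomorphism of exact pairs by the naturality of $\eta$ (if $f=\eta_{Y'^\bu}\alpha^\bu$ and $\varphi\colon Y'^\bu\to X^\bu$ is an isomorphism, then $\varphi f=\eta_{X^\bu}\,\varphi(1)\,\alpha^\bu$). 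The paper accordingly dismisses $(1)\Leftrightarrow(2)$ as immediate from the definition; you should delete your false lemma rather than try to repair it.

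The second concrete defect is the test object in $(3)\Rightarrow(2)$. Your choice $V^\bu=X^\bu$ is the right one for the \emph{dual} direction $(4)\Rightarrow(2)$, where one extends the canonical inflation $X^\bu\to\cone(\eta_{X^\bu})$ along $i^\bu$ using surjectivity of $i_\ast$. For $(3)\Rightarrow(2)$ one needs $\cone(\eta_{V^\bu})$ to map canonically \emph{onto} $Z^\bu$, and the correct choice --- the paper's --- is $V^\bu=Z^\bu[-1](-1)$: the sign in $d_{Z^\bu[-1]}$ cancels against the cone sign, so $(1\ 0)\colon \cone(\eta_{Z^\bu[-1](-1)})\to Z^\bu$ is a chain map, and $Z^\bu[-1](-1)$ is not a shift of $X^\bu$, so your hedge ``or a suitable shift'' does not cover it. Lifting this map through $p^\ast$ produces (in the normalized form $Y^n=Z^n\oplus X^n$) a chain map with components $\begin{pmatrix} \Id & 0 \\ \al^\bu & \beta^\bu \end{pmatrix}$, whose chain-map identity gives $h^n+d_X^{n-1}\al^{n-1}-\al^n d_Z^{n-1}=\beta^n\eta_{Z^{n-1}(-1)}$; the left side is a representative $\widetilde{h}^\bu$ homotopic to $h^\bu$, and it factors through $\eta$, whence $(2)$ by naturality. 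So your strategy is the paper's strategy, but as written both the split-epimorphism justification and the named test object would fail and require these corrections.
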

\begin{proof}
The equivalence of (1) and  (2) follows immediately from the definition of $\eta$-conflations.
By the projectivity (\emph{resp.} injectivity) of $\cone(\eta_{V^\bu})$
in the Frobenius category $(C(\B), E_\eta)$ for any object $V^\bu$, we know that (2) implies (3)
(\emph{resp.} (4)). Suppose that $Y^n=Z^n\oplus X^n$ and the differential
 $d_Y^n=\begin{pmatrix} d_Z^n & 0 \\ h^{n+1} & d_X^n\end{pmatrix}$ with the homotopy invariant $h^\bu\colon Z^\bu[-1] \to X^\bu$,
and $i^n=\left(0\atop 1\right)$, $p^n=(1\ 0)$ for each $n\in \Z$.

``(3) $\Longrightarrow$ (2)".  Take $V^\bu=Z^\bu(-1)[-1]$, and we have that
\[\Hom(\cone(\eta_{Z^\bu[-1](-1)}), Y^\bu)\xr{p^\ast} \Hom(\cone(\eta_{Z^\bu[-1](-1)}), Z^\bu)\]
 is an epimorphism of abelian groups. It follows that
 there exists a chain map $\footnotesize{\begin{pmatrix} 1 & 0 \\ \al^\bu & \beta^\bu\end{pmatrix}}
\colon \cone(\eta_{Z^\bu[-1](-1)})\to Y^\bu$ such that the following diagram commutes
\[\xymatrix{
& \cone(\eta_{Z^\bu[-1](-1)}) \ar@{.>}[dl]_(.7){\footnotesize{\begin{pmatrix} \Id_{Z^\bu} & 0 \\ \al^\bu & \beta^\bu\end{pmatrix}}}
\ar[d]^{(1, 0)}\\
Y^\bu \ar[r]^{(1, 0)} & Z^\bu}\]
Since $\footnotesize{\begin{pmatrix} 1 & 0 \\ \al^\bu & \beta^\bu\end{pmatrix}}$
is a chain map, we have \begin{align*}
h^{n}+d_X^{n-1}\al^{n-1}  =\al^{n}d_Z^{n-1}+\beta^{n}\eta_{Z^{n-1}(-1)}
\end{align*}
Set ${\widetilde h}^{n}=h^{n}+d_X^{n-1}\al^{n-1}+\al^{n}(-d_Z^{n-1})$, which is homotopic to $h^\bu \colon Z^\bu[-1] \to X^\bu$. Then we have $Y\bu=\cone(\widetilde{h}^\bu)$ and
${\widetilde h}^\bu =\beta^\bu\eta_{Z^\bu[-1](-1)}$ factors through $\eta_{Z^\bu[-1](-1)}$.
It follows from the naturality of $\eta$ that $\widetilde{h}^\bu$ factor through $\eta_{X^\bu}$.

``(4) $\Longrightarrow$ (2)" can be proved dually and we omit the proof.
\end{proof}

\begin{thm} \label{Frob cat (1)}
The exact category $(\C(\B), \E_{\eta})$ is a Frobenius category.
\end{thm}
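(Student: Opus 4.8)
The goal is to establish that $(\C(\B), \E_\eta)$ is a Frobenius category, which by definition requires three things: enough projective objects, enough injective objects, and that the class of projectives coincides with the class of injectives. Lemma \ref{proj and inj obj} has already done the crucial work of identifying a supply of objects—namely the mapping cones $\cone(\eta_{V^\bu})$—that are simultaneously projective and injective. So the plan is to show that these cones provide \emph{enough} projectives and enough injectives, and then to argue that \emph{every} projective (resp. injective) object is a direct summand of such a cone, forcing the two classes to agree.

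First I would show there are enough projectives. For an arbitrary object $X^\bu$ in $\C(\B)$, I want to produce an $\eta$-deflation onto $X^\bu$ from one of the distinguished cones. The natural candidate is $\cone(\eta_{X^\bu[-1]})$, built from the transformation $\eta_{X^\bu[-1]}\colon X^\bu[-1](1)\to X^\bu[-1]$. By Definition \ref{1-ex pair}, the standard $\eta$-conflation associated to a chain map factoring through $\eta$ has the form $Y^\bu \to \cone(f^\bu)\to X^\bu[1]$; taking $f^\bu = \eta_{X^\bu[-1]}$ should yield a conflation whose deflation lands on $X^\bu$. Concretely I expect the sequence
\[
X^\bu[-1](1) \xr{\left(0\atop 1\right)} \cone(\eta_{X^\bu[-1]}) \xr{(1\ 0)} X^\bu
\]
to be a standard $\eta$-conflation, exhibiting an $\eta$-deflation onto $X^\bu$ with projective source. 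The dual construction, using $\cone(\eta_{X^\bu})$ (or an appropriate shift), gives an $\eta$-inflation of $X^\bu$ into an injective cone, yielding enough injectives.

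Second, and this is where the real content lies, I would prove that the projectives and injectives coincide. The clean way is to characterize projectives intrinsically: an object $P^\bu$ is projective in $(\C(\B), \E_\eta)$ if and only if every $\eta$-conflation ending in $P^\bu$ splits, equivalently $P^\bu$ is a direct summand of some $\cone(\eta_{V^\bu})$. To see that a projective $P^\bu$ is such a summand, take the $\eta$-deflation $\cone(\eta_{P^\bu[-1]})\to P^\bu$ constructed above; projectivity of $P^\bu$ lets the identity $\id_{P^\bu}$ lift through this deflation, so the conflation splits and $P^\bu$ is a summand of the cone. Since every $\cone(\eta_{V^\bu})$ is also injective by Lemma \ref{proj and inj obj}, and injectivity is inherited by direct summands, every projective is injective; the symmetric argument gives the reverse inclusion. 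I would also invoke Lemma \ref{lem3.4}, whose conditions (3) and (4) give the Hom-functor reformulations of $\eta$-exactness against the cones, to streamline the lifting and splitting arguments.

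The main obstacle I anticipate is verifying that the two candidate sequences I wrote down are genuinely standard $\eta$-conflations in the precise sense of Definition \ref{1-ex pair}—that is, checking that $\eta_{X^\bu[-1]}$ (resp. its dual) literally factors through the relevant $\eta$ so that the cone construction applies, and that the resulting deflation is the intended projection onto $X^\bu$. This requires unwinding the identities in Lemma \ref{prop of eta}, particularly parts (iii) and (iv) describing how $\eta$ interacts with $\cone$ and the shift, together with the hypothesis $\eta_{X(1)}=\eta_X(1)$. Once these identifications are pinned down, the summand argument for equality of the two classes is formal, relying only on the standard fact that projectivity and injectivity pass to direct summands and on the self-dual nature of the distinguished cones.
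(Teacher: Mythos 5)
Your proposal follows essentially the same route as the paper's proof: Lemma \ref{proj and inj obj} supplies the projective--injective cones, the canonical standard $\eta$-conflations give enough projectives and injectives, and the splitting/summand argument identifies the two classes. However, your ``enough projectives'' step contains a concrete bookkeeping error. For a chain map $f^\bu\colon A^\bu\to B^\bu$, the standard $\eta$-conflation is $B^\bu\to\cone(f^\bu)\to A^\bu[1]$: the inflation starts at the \emph{codomain} of $f^\bu$ and the deflation lands on the shift of its \emph{domain}. Taking $f^\bu=\eta_{X^\bu[-1]}\colon X^\bu[-1](1)\to X^\bu[-1]$ therefore yields
\[X^\bu[-1] \xr{\left(0\atop 1\right)} \cone(\eta_{X^\bu[-1]}) \xr{(1\ 0)} X^\bu(1),\]
not the sequence you displayed: your deflation lands on $X^\bu(1)$ rather than $X^\bu$, and your inflation begins at the wrong object. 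To deflate onto $X^\bu$ itself one must pre-twist and use $f^\bu=\eta_{X^\bu[-1](-1)}\colon X^\bu[-1]\to X^\bu[-1](-1)$, whose cone deflates onto $(X^\bu[-1])[1]=X^\bu$; this is precisely the object $\cone(\eta_{X^\bu[-1](-1)})$ appearing in the paper. (The slip is repairable even without the twist: since $(1)$ is an automorphism and $\E_\eta$ is stable under it, a deflation onto every $X^\bu(1)$ already gives one onto every object.) Your injective side is stated correctly, as the standard $\eta$-conflation of $\eta_{X^\bu}$ does begin with the inflation $X^\bu\to\cone(\eta_{X^\bu})$. Note also that the ``main obstacle'' you anticipate is in fact trivial: $\eta_{V^\bu}$ factors through $\eta_{V^\bu}$ via the identity, so these sequences are standard $\eta$-conflations with no appeal to Lemma \ref{prop of eta} needed. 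The rest of your argument --- projectivity (resp.\ injectivity) splits the relevant conflation, so every projective or injective object is a direct summand of a cone that is both, and both properties pass to summands --- is exactly the paper's argument, which runs the injective direction first via the inflation $X^\bu\to\cone(\eta_{X^\bu})$ and dualizes.
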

\begin{proof}
Let $X^\bu$ be any object in $C(\B)$.
Since
$X^\bu\xr{\left(0\atop 1\right)} \cone(\eta_{X^\bu})$
 is an $\eta$-inflation, and
$\cone(\eta_{X^\bu[-1](-1)}) \xr{(1\ 0)} X^\bu$
  is an $\eta$-deflation, it follows from Lemma \ref{proj and inj obj} that
$(\C(\B), \E_{\eta})$ has enough projective and injective objects.

On the other hand, an object $X^\bu$ is injective if and only if
$X^\bu$ is a direct summand of $\cone(\eta_{X^\bu})$. By Lemma \ref{proj and inj obj},
$\cone(\eta_{X^\bu})$ is a projective object. It follows that $X^\bu$ is projective.
Dually, a projective object is also an injective object.
\end{proof}

We denote $K_{\eta}(\B)$ by the stable category of the Frobenius category
$(\C(\B), \E_{\eta})$. Recall that $K_\eta(\B)$ has the same objects as $\C(\B)$ and a
morphism in $K_{\eta}(\B)$ is the equivalence class $\overline{f^\bu}$ of
$f^\bu\colon X^\bu \to Y^\bu$ of $\C(\B)$ modulo the subgroup of
morphisms factoring through some injective object in $(\C(\B),
\E_{\eta})$.

By \cite[Theorem I.2.8]{Ha1}, we know that $K_{\eta}(\B)$ is a
triangulated category and $(1)[1]$ is the corresponding translation functor. Each
chain map $f^\bu \colon X^\bu \to Y^\bu$ induces a standard triangle
\[ X^\bu \xr{\overline{f^\bu}} Y^\bu
\xr{{\left(0\atop 1\right)}} \cone(f^\bu\eta_{X^\bu})
\xr{{(1\ 0)}} X^\bu(1)[1].\]

Next we give the homotopy relation on morphisms in $C(\B)$ given by the stable category of $(C(\B), \E_\eta)$.

\begin{Def}\label{def of homotopy}
Let $X^\bu$ and $Y^\bu$ be complexes in $\B$. The chain maps $f^\bu,
g^\bu\colon X^\bu \to Y^\bu$ are said to be \emph{$\eta$-homotopic},
denoted by $f^\bu \sim_{\eta} g^\bu$, provided that there is a
family of morphisms $\{s^n\colon X^{n}(1)\to Y^{n-1}\}_{n\in \Z}$ in
$\B$ such that
\[(f^n-g^n)\eta_{X^{n}}=s^{n+1}d_X^n(1)+d_Y^{n-1}s^{n}\]
for each $n\in \Z$.
A chain map $f^\bu\colon X^\bu \to Y^\bu$ is said to be
\emph{$\eta$-null-homotopic} if $f^\bu \sim_{\eta} 0$, where $0$ is
the zero chain map.
\end{Def}

Let $f^\bu, g^\bu\colon X^\bu \to Y^\bu$ be morphisms in $C(\B)$,
then $f^\bu \sim_{\eta} g^\bu$ if and only if $f^\bu\eta_{X^\bullet}\sim g^\bu\eta_{X^\bullet}$,
where ``$\sim$" is the homotopy relation on morphisms given by the stable category $(\C(\B), \E)$.

\begin{prop}\label{prop of homotopy}
A chain map $f^\bu\colon X^\bu \to Y^\bu$ is $\eta$-null-homotopic
if and only if $\overline{f^\bu}=0$ in the stable category
$K_{\eta}(\B)$.
\end{prop}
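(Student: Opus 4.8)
The plan is to unwind the definition of $\eta$-null-homotopy into the statement that $f^\bu$ factors through an injective object of $(\C(\B),\E_\eta)$, and then invoke the definition of the stable category $K_\eta(\B)$. By the remark following Theorem~\ref{Frob cat (1)} (the sentence relating $\sim_\eta$ to the ordinary homotopy relation $\sim$ on $(\C(\B),\E)$), we know that $f^\bu\sim_\eta 0$ if and only if $f^\bu\eta_{X^\bu}\sim 0$ in the classical sense. Since $\sim$ is precisely the relation of factoring through an injective object in the classical Frobenius category $(\C(\B),\E)$, and the classical injectives are the contractible complexes (mapping cones), this should give a clean foothold. So the first step is to reduce everything to the already-established comparison $f^\bu\sim_\eta g^\bu \iff f^\bu\eta_{X^\bu}\sim g^\bu\eta_{X^\bu}$.

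Next I would make the two directions explicit. For the ``only if'' direction, suppose $f^\bu\sim_\eta 0$, witnessed by a family $\{s^n\colon X^n(1)\to Y^{n-1}\}$ with $f^n\eta_{X^n}=s^{n+1}d_X^n(1)+d_Y^{n-1}s^n$. The goal is to build an explicit factorization of $f^\bu$ through the injective object $\cone(\eta_{X^\bu})$, whose components and differential are recorded in the proof of Lemma~\ref{proj and inj obj}: $(\cone(\eta_{X^\bu}))^n=X^{n+1}(1)\oplus X^n$ with differential $\begin{pmatrix}-d_X^{n+1}(1)&0\\ \eta_{X^{n+1}}&d_X^n\end{pmatrix}$. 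The natural candidate is to factor $f^\bu$ as the $\eta$-inflation $X^\bu\xr{\left(0\atop 1\right)}\cone(\eta_{X^\bu})$ followed by a chain map $\cone(\eta_{X^\bu})\to Y^\bu$ assembled from the homotopy $s^\bu$; one checks that $(s^{n+1},\, f^n)\colon X^{n+1}(1)\oplus X^n\to Y^n$ (possibly after a sign adjustment) is a genuine chain map precisely because the homotopy identity above holds, and that composing with $\left(0\atop 1\right)$ recovers $f^\bu$. This realizes $f^\bu$ as a morphism factoring through an injective, hence $\overline{f^\bu}=0$.

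For the converse, suppose $\overline{f^\bu}=0$ in $K_\eta(\B)$, i.e.\ $f^\bu$ factors through some injective object $J^\bu$ of $(\C(\B),\E_\eta)$. By Theorem~\ref{Frob cat (1)} every injective is a direct summand of some $\cone(\eta_{V^\bu})$, so without loss of generality $f^\bu$ factors through $\cone(\eta_{V^\bu})$, say $f^\bu=v^\bu u^\bu$ with $u^\bu\colon X^\bu\to\cone(\eta_{V^\bu})$ and $v^\bu\colon\cone(\eta_{V^\bu})\to Y^\bu$. Writing out $u^\bu$ and $v^\bu$ in components against the explicit differential of $\cone(\eta_{V^\bu})$ and extracting the chain-map conditions, one reads off a family $\{s^n\}$ satisfying the $\eta$-homotopy identity of Definition~\ref{def of homotopy}; the crucial input is that the lower-left entry $\eta_{V^{n+1}}$ of the differential of the cone is exactly what produces the factor $\eta_{X^n}$ on the left-hand side after using naturality of $\eta$.

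The main obstacle I anticipate is bookkeeping with the twist $(1)$ and the shift $[1]$ together with the signs in the cone differential: the homotopy morphisms $s^n$ live on the twisted objects $X^n(1)$, so every comparison with the classical homotopy must carry the $\eta$-shift through correctly, and the naturality relation $\eta_{X(1)}=\eta_X(1)$ (Lemma~\ref{prop of eta}) has to be applied at the right places to convert cone-differential terms into the $\eta_{X^n}$ appearing in Definition~\ref{def of homotopy}. If one prefers to avoid this by hand, the cleanest route is to lean entirely on the stated equivalence $f^\bu\sim_\eta 0\iff f^\bu\eta_{X^\bu}\sim 0$ and on the standard fact that in $(\C(\B),\E)$ a chain map is classically null-homotopic iff it factors through a contractible complex iff its class vanishes in the classical stable category; combined with the identification of the injectives of $\E_\eta$ as summands of cones $\cone(\eta_{V^\bu})$, this turns the whole proposition into a translation between two descriptions of the same vanishing condition, and the explicit homotopy computation is needed only to certify that the two injective-object classes match up.
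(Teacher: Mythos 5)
Your proposal is correct and follows essentially the same route as the paper: identify $\overline{f^\bu}=0$ with factoring through the injective $\cone(\eta_{X^\bu})$ via the inflation $\left(0\atop 1\right)$, and observe that the chain-map condition on $(s^\bu, f^\bu)\colon \cone(\eta_{X^\bu})\to Y^\bu$ is exactly the $\eta$-homotopy identity of Definition \ref{def of homotopy} (no sign adjustment is in fact needed). The only minor divergence is in the converse, where you factor through an arbitrary injective (a summand of some $\cone(\eta_{V^\bu})$) and extract $s^\bu$ by naturality of $\eta$ together with $\eta_{X(1)}=\eta_X(1)$ --- a computation that does go through --- whereas the paper shortcuts with the standard Frobenius-category fact that vanishing in the stable category is equivalent to factoring through the fixed canonical inflation $X^\bu\to\cone(\eta_{X^\bu})$.
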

\begin{proof}
By definition, $\overline{f^\bu}=0$ in the stable category
$K_{\eta}(\B)$ if and only if $f^\bu$ factors through
$X^\bu\xr{\left(0\atop 1\right)} \cone(\eta_{X^\bu})$,
or equivalently, there exists a map $s^\bu\colon X^\bu[1](1) \to Y^\bu$ such that
$(s^\bu, f^\bu):\cone(\eta_{X^\bu}) \to Y^\bu$ is a chain map. Explicitly,
there exists $s^n\colon X^{n}(1)\to Y^{n-1}$ such that
\[f^{n}\eta_{X^{n-1}}=s^{n+1}d_X^{n}(1)+d_Y^{n-1}s^{n}\]
for any $n\in \Z$.
\end{proof}

\begin{ex}
(1) If $\eta$ is a natural isomorphism, then $\E_\eta$ consists
of all chainwise split short exact sequences of complexes, that is, $\E_\eta =\E$.

(2) If $\eta$ is a zero natural transformation, then $\E_\eta$
consists of all split short exact sequences of complexes and the corresponding stable category
is trivial.

\end{ex}

\begin{rmk}  Let $(\C, \E)$ be an exact category,
with enough project objects and enough injective objects.
Suppose that $\mathcal{P}'$ and $\mathcal{I}'$ are two full subcategories of $\C$. In \cite[Theorem 5.1]{Ch} for detail.
 Chen proved that the pair $(\C, \E')$ is a Frobenius category, where $\E'$ is the class of
 left $\P'$-acyclic conflations provide that $\P'$ and $\mathcal{I}'$ satisfy some certain conditions.

Take $\C=C(\B)$ together with the Frobenius exact structure $\E$ given by chainwise split short exact sequences of complexes,
and  $\mathcal{P}'=\mathcal{I}'$ is the smallest full additive subcategory of $C(\B)$ which contains $\cone(\eta_{X^\bu})$ for all $X^\bu\in C(\B)$ and is closed under direct summands and under isomorphisms.
From Lemma \ref{lem3.4}, it immediately follows that the Frobenius category $(C(\B),
\E')$ coincides with $(C(\B), \E_\eta)$.
\end{rmk}

\begin{rmk} Let $\mathcal{T}$ be a triangulated category, and let $\mathcal{X}$ be a full subcategory of $\mathcal{T}$, closed under isomorphisms, direct sums, and
 direct summands, which is both preenveloping
 and precovering. Denoted by $\mathcal{T}_{\mathcal{X}}$ the stable category of $\mathcal{T}$ associated to $\mathcal{X}$.
 In \cite{Jphi}, J$\phi$rgensen proved that $\mathcal{T}_{\mathcal{X}}$ is a pretriangulated category.
 Take $\mathcal{T}=K(\B)$ and $\mathcal{X}$ is the smallest full additive subcategory of $K(\B)$ which contain $\cone(\eta_{X^\bu})$ for all $X^\bu$ in $K(\B)$ and is closed under taking direct summands. One can check that $K(\B)_{\mathcal{X}}$ is exactly a triangulated
 category and coincides with the triangulate category $K_\eta(\B)$.
\end{rmk}

\begin{rmk} For any $m\geq 1$, we consider the composition of $m$ copies of the functor $(1)$ , denoted by $(m)$.
Clearly,
$\eta^{m}_X=\eta_X\eta_{X(1)}\cdots \eta_{X(m-1)}$ gives a natural
transformation from $(m)$ to $\Id_{\B}$. Repeated application of Theorem \ref{Frob cat (1)}
enable us to obtain the Frobenius exact structure
$\E_{\eta^m}$ on $\C(\B)$. Denote by $\P_{\eta^m}$ the full subcategory of
projective objects in the Frobenius category $(\C(\B),\E_{\eta^m})$.
Then we have the following observations:
\begin{center}
$\E \supset \E_{\eta} \supset \cdots \supset \E_{\eta^m} \supset \cdots$,
and
$\P \subset \P_{\eta} \subset \cdots \subset \P_{\eta^m} \subset \cdots$.
\end{center}
where $\P$ is the full subcategory of projective-injective objects in the
Frobenius category $(\C(\B),\E)$.

\end{rmk}

\section{An application}

In this section, we introduce a new category $\gr\A$ for any additive category $\A$ endowed with
an automorphism endofunctor $(1)$ and a natural transformation
$\eta\colon (1) \to \Id$ satisfying $\eta_{X(1)}=\eta_X(1)$ for any object $X$ in $\gr\A$.
The category $C(\gr\A)$ of complexes in $\gr\A$ is isomorphic to the category $G(\A)$ introduced in
\cite[Definition 2.4]{Ric}. By Theorem \ref{Frob cat (1)}, the categories $C(\gr\A)$ and $G(\A)$
admit a new Frobenius exact structure and the corresponding stable categories are triangulated categories.
Moreover, using the $\eta$-homotopy relation in Proposition \ref{1-null-homotopic}, we will realize the functor in \cite[Proposition 2.11]{Ric}
as the composition of two triangle functors.

\subsection{The category $\gr\A$}
\begin{Def}\label{def of B}
 Let $\A$ be an additive category. The category $\gr\A$ is defined as follows:

(1) its objects are $\Z$-graded objects $X^\bu$ with $X^{j}$ in $\A$
for each $j\in \Z$,

(2) a morphism from $X^\bu$ to $Y^\bu$ is a collection
 $f=\{f_0, f_1, \cdots, f_n,\cdots \}$ of homogeneous 
 maps from $X^\bu$ to $Y^\bu$,
 where $f_n$ has degree $n$ with $f^{j}_n\colon X^j\to Y^{j+n}$
 in $\A$ for all $n\geq 0, j\in\Z$,

(3) the composition of morphisms $ f\colon X^\bu \rightarrow Y^\bu$
and $g \colon Y^\bu \to Z^\bu$ is defined by
\[(g f)_{n}^j=g_{n}^j f_{0}^j+g_{n-1}^{j+1} f_{1}^{ j}+\cdots+g_{0}^{j+n}f_{n}^{j}\]
for $n\ge 0, j\in \Z$. The identity map is $\{\Id,0,0,\cdots\}$.
\end{Def}

We consider the functor $(1):\gr\A \to \gr\A$ given by
$(X^\bu{(1)})^{j}=X^{j+1}$, and $(f^\bu{(1)})_{n}^{j}=f_{n}^{j+1}$
for any object $X^\bu$  and any morphism $f \colon X^\bu \to Y^\bu$ in $\gr\A$.
Clearly, the functor $(1)$ is an automorphism with the inverse given by
$(X^\bu{(-1)})^{j}=X^{j-1}$, and $(f{(-1)})_{n}^{j}=f_{n}^{j-1}$
for any object $X^\bu$  and any morphism $f \colon X^\bu \to Y^\bu$ in $\gr\A$.
There exists a natural
transformation ${\eta}\colon {(1)} \to \Id_{\gr\A}$ given by
\[{\eta}_{X^\bu}=\{0, \Id, 0, 0, \cdots\} \colon X^\bu{(1)} \to X^\bu\]
for any object $X^\bu$ in $\gr\A$, where $\Id \colon (X^\bu{(1)})^j
\to X^{j+1}$ $(j\in \Z)$ is the identity map in $\A$.
It is easily seen that ${\eta}_{X^\bu{(1)}}={\eta}_{X^\bu}{(1)}$  for any object $X^\bu$ in
$\gr\A$. From Theorem \ref{Frob cat (1)}, it follows that the  category of complexes
$C(\gr\A)$ has a new Frobenius exact structure $\E_\eta$.

More precisely, a complex in the category $\gr\A$ can be written as a bigraded object
$X^{\bu \bu}$ together with the differential $d^{\bu \bu}$, where $X^{ij}$
is an object of $\A$ and $d^{\bu \bu}$ is a collection of morphisms
$\{d^{ij}_{n}:X^{ij}\to X^{i+1,j+n},n\geq 0, i,j\in \Z\}$ of $\A$ satisfying
\begin{align}\label{d0 d_1 cdots d_n}
d_0^{i+1,j+n}d_n^{ij}+d_1^{i+1,j+n-1}d_{n-1}^{ij}+\cdots+d_n^{i+1,j}d_0^{ij}=0
\end{align}
for any $n\geq 0$ and $i,j\in \Z$. For simplicity of notation, we sometimes write a
 complex $(X^{\bu \bu}, d_X^{\bu \bu})$  instead of $X$. A morphism $f\colon X \to Y$ is a
collection of morphisms $\{ f^{ij}_{n}:X^{ij}\to Y^{i,j+n},n\geq
0\}$ in $\A$ satisfying
\begin{align}\label{f and d}
\begin{split}
 &f_{n}^{i+1,j}d_{X,0}^{ij}+f_{n-1}^{i+1,j+1}d_{X,1}^{ij}+\cdots+ f_{0}^{i+1,j+n}d_{X,n}^{ij}\\
 =&d_{Y,n}^{ij} f_{0}^{ij}+d_{Y,n-1}^{i,j+1}f_{1}^{ij}+\cdots+d_{Y,0}^{i,j+n} f_{n}^{ij}
 \end{split}
 \end{align}
for any $n\geq 0$ and $i,j\in \Z$. The composition $g f$ of two morphisms $f:X\to Y $ and $g: Y\to Z$ is given by
\[( g f)_{n}^{ij}= g_{n}^{ij}f_{0}^{ij}+g_{n-1}^{i,j+1}f_1^{ij}+\cdots+ g_{0}^{i,j+n}f_{n}^{ij}\]
for any $n\geq 0, i, j\in \Z$.

Let $X$ be an object and $f\colon X\to Y$ be
a morphism in $\C(\gr\A)$. We observe that $(X^{\bu, j}, d^{\bu,j}_{X,0})$ is a complex in $\A $  for any $j$,
and $ f_0^{\bu,j}\colon (X^{\bu, j}, d^{\bu,j}_{X,0})\to  (Y^{\bu, j}, d^{\bu,j}_{Y,0})$ is a chain map in $C(\A)$.

By Definition \ref{def of homotopy} and Proposition \ref{prop of
homotopy}, we immediately get the $\eta$-homotopy relation on
$\C(\gr\A)$.

\begin{cor}\label{eta-homotopy in C(Z+A)}
Let $f=\{f_0, f_1, \cdots, f_n,\cdots\} \colon X \to Y$ be a chain map in $\C(\gr\A)$. Then
$\bar f$ vanishes in $\K_\eta(\gr\A)$ if and only if there exist
$s^{i j}_n\colon X^{i j}\to Y^{i-1,j+n-1}$
such that
\begin{align}\label{1-null-homotopic}
\begin{split}
0 & = s_0^{i+1,j}d_{X,0}^{ij}+d_{Y,0}^{i-1,j-1}s_0^{ij}\\
f_{n}^{i j} & =\sum_{p+q=n+1}
(s_p^{i+1,j+q}d_{X,q}^{ij}+d_{Y,p}^{i-1,j+q-1}s_q^{ij})
\end{split}
\end{align}
for any $i,j\in \Z, n\ge 0$.
\end{cor}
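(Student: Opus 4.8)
The plan is to deduce this from Proposition \ref{prop of homotopy} and then to spell out the resulting abstract condition in the bigraded components of $C(\gr\A)$. By Proposition \ref{prop of homotopy}, the class $\bar f$ vanishes in $K_\eta(\gr\A)$ if and only if $f$ is $\eta$-null-homotopic, which by Definition \ref{def of homotopy} means that there is a family of morphisms $s^i\colon X^i(1)\to Y^{i-1}$ in $\gr\A$, one for each complex degree $i$, such that
\[ f^i\eta_{X^i}=s^{i+1}\,d_X^i(1)+d_Y^{i-1}\,s^i \]
for every $i\in\Z$. Since both sides are morphisms in $\gr\A$, the entire content of the corollary is the computation of the homogeneous components of this single identity.

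First I would record the data in components. Each $s^i$ is a collection $\{(s^i)_n\}_{n\ge 0}$ of homogeneous maps, and a comparison of source and target objects identifies $(s^i)_n$ with the map $s_n^{ij}\colon X^{ij}\to Y^{i-1,j+n-1}$ occurring in the statement, up to the grading shift induced by $(1)$. I would likewise write $f^i$, $d_X^i$ and $d_Y^{i-1}$ in components, recalling from Section 4 that $(1)$ acts on a morphism by $(f(1))_n^j=f_n^{j+1}$ and, crucially, that $\eta_{X^i}=\{0,\Id,0,\dots\}$ has a single nonzero homogeneous component, namely the identity in degree $1$.

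Next I would expand both sides using the composition rule $(\psi\phi)_n^j=\sum_{p+q=n}\psi_p^{j+q}\phi_q^j$ of Definition \ref{def of B}. The special shape of $\eta_{X^i}$ is what drives the result: because its degree-$0$ component vanishes while its degree-$1$ component is the identity, the degree-$n$ component of the left-hand side equals $f_{n-1}^{i,j+1}$ for $n\ge 1$ and vanishes for $n=0$. Expanding the right-hand side and comparing homogeneous components degree by degree, the degree-$0$ comparison produces the first equation of \eqref{1-null-homotopic}, while the degree-$n$ comparisons for $n\ge 1$, after reindexing $n\mapsto n+1$ together with the accompanying grading shift $j\mapsto j-1$, produce the second. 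Every step is reversible, so the component equations are equivalent to the morphism identity above, which finishes the proof.

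The computation is routine, and the only real difficulty is the bookkeeping. One must track three indices at once — the complex degree $i$, the homogeneous degree $n$, and the internal grading $j$ — and keep consistent account of the shift $j\mapsto j+1$ introduced by the functor $(1)$ and of the reindexing $n\mapsto n+1$ that turns the degree-$(n\ge 1)$ comparisons into the stated formula for $f_n^{ij}$. Isolating the degree-$0$ comparison, which is forced to vanish precisely because $(\eta_{X^i})_0=0$, as the separate first equation of \eqref{1-null-homotopic} is the one point that requires care.
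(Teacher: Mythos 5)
Your proposal is correct and follows essentially the same route as the paper, which derives the corollary directly from Proposition \ref{prop of homotopy} and Definition \ref{def of homotopy} and leaves the component expansion implicit. Your unwinding is accurate: with the identification $s_n^{ij}=(s^i)_n^{j-1}$ forced by the shift $(1)$, the fact that $\eta_{X^\bu}=\{0,\Id,0,\cdots\}$ kills the degree-$0$ component (yielding the first equation of \eqref{1-null-homotopic}) and shifts degree by one elsewhere (yielding, after $n\mapsto n+1$ and $j\mapsto j-1$, the second) reproduces the stated formulas exactly.
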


Rickard introduced a category $G(\A)$ in \cite[Definition 2.4]{Ric} for any additive category $\A$, which plays an important role in the Rickard's proof
of derived Morita theory.

\begin{Def}[\emph{\cite{Ric}}]
Let $\A$ be an additive category. The category $G(\A)$ is defined as follows.
The objects of $G(\A)$ are
systems $\{X^{\ast \ast}, d_n\colon n=0,1,\cdots,\}$, where $X^{\ast\ast}$ is a bigraded
object of $\A$, and $d_n$ is a graded endomorphism of $X^{\ast\ast}$ of degree $(1-n, n)$,
satisfying
\begin{align*}
d_0d_n+d_1d_{n-1}+\cdots+d_nd_0=0, (n\ge 0).
\end{align*}
The morphisms from $\{X^{\ast\ast},d_{X,n}\}$ to $\{Y^{\ast\ast},d_{Y,n}\}$ are collections
$\al=\{\al_n\colon n=\ge 0\}$ of graded maps $X^{\ast\ast}\to Y^{\ast\ast}$,
such that $\al_n$ has degree $(-n,n)$ and
\[\al_0d_{X,n}+\al_1d_{X,n-1}+\cdots+\al_nd_{X,0}=d_{Y,0}\al_n+d_{Y,1}\al_{n-1}+\cdots+d_{Y,n}\al_0,\]
for each $n$.
The composition of maps $\beta=\{\beta_n\colon n\ge 0\}: X^{\ast \ast}\to Y^{\ast \ast}$ and $\al=\{\al_n\colon n\ge 0\}\colon Y^{\ast\ast}\to Z^{\ast\ast}$
is defined by
\[(\al\beta)_n=\al_0\beta_n+\al_1\beta_{n-1}+\cdots+\al_n\beta_0.\]
\end{Def}

\begin{prop}\label{C(grA) and G(A)}
Let $\A$ be an additive category. Then the categories $G(\A)$ and $C(\gr\A)$ are isomorphic.
\end{prop}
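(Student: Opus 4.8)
The plan is to construct mutually inverse isomorphisms of categories by a \emph{shear re-indexing} of the bigrading. Comparing the two definitions, an object of $C(\gr\A)$ is a bigraded object $\{X^{ij}\}$ equipped with a differential $\{d_n^{ij}\colon X^{ij}\to X^{i+1,j+n}\}$ subject to \eqref{d0 d_1 cdots d_n}, whereas an object of $G(\A)$ is a bigraded object equipped with maps $d_n$ of degree $(1-n,n)$ subject to $\sum_{p+q=n}d_pd_q=0$. The only genuine difference between the two descriptions is the bidegree of the differential: in $C(\gr\A)$ the component $d_n$ raises the bidegree by $(1,n)$, while in $G(\A)$ it raises it by $(1-n,n)$. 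Everything else---the morphism-compatibility conditions and the composition laws---is the same convolution sum, so the heart of the matter is to absorb this bidegree discrepancy.

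First I would introduce the re-indexing $\sigma\colon\Z^2\to\Z^2$, $\sigma(i,j)=(i-j,j)$, which is a bijection with inverse $\sigma^{-1}(p,q)=(p+q,q)$. Define $F\colon C(\gr\A)\to G(\A)$ on objects by $F(X)^{pq}=X^{p+q,q}$, carrying each component $d_n^{ij}$ to the same underlying morphism, now viewed between the relabelled objects. A direct computation gives $\sigma(i+1,j+n)-\sigma(i,j)=(1-n,n)$, so the transported differential has exactly the degree $(1-n,n)$ required in $G(\A)$; and the relation \eqref{d0 d_1 cdots d_n} at the spot $(i,j)$ is transported verbatim to $\sum_{p+q=n}d_pd_q=0$ at the spot $(i-j,j)$, since both are the same convolution of the same morphisms between the matched components. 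On morphisms, $f_n^{ij}\colon X^{ij}\to Y^{i,j+n}$ has $C$-bidegree $(0,n)$; since $\sigma(i,j+n)-\sigma(i,j)=(-n,n)$, the transported map has the bidegree $(-n,n)$ demanded of $\alpha_n$ in $G(\A)$, and the compatibility \eqref{f and d} becomes Rickard's morphism condition term by term.

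Next I would observe that the composition formulas are literally identical: $(gf)_n^{ij}=\sum_{p+q=n}g_p^{i,j+q}f_q^{ij}$ in $C(\gr\A)$ and $(\alpha\beta)_n=\sum_{p+q=n}\alpha_p\beta_q$ in $G(\A)$ agree under the relabelling, and the identity $\{\Id,0,0,\dots\}$ is sent to the identity. Hence $F$ is a functor. The inverse functor $G(\A)\to C(\gr\A)$ is defined by the inverse re-indexing $\sigma^{-1}$, placing $Y^{pq}$ in position $(p+q,q)$ and transporting the data $d_n,\alpha_n$ back unchanged. Because $\sigma$ and $\sigma^{-1}$ are mutually inverse bijections of $\Z^2$ and the differentials and morphism components are passed along without alteration, the two functors are mutually inverse on objects and on hom-sets; this yields the claimed isomorphism of categories.

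The only point that requires any thought is discovering the correct re-indexing: once one notices that the differential shifts $(1,n)$ and $(1-n,n)$ differ precisely by the shear $(i,j)\mapsto(i-j,j)$---which conveniently sends the complex degree $i$ to the $G(\A)$ total degree $p+q$---every defining relation and the composition law match term by term, and the remaining verification is purely combinatorial index-bookkeeping with no conceptual obstacle. I would therefore devote the bulk of the write-up to making the index matching in \eqref{d0 d_1 cdots d_n} and \eqref{f and d} explicit and to checking functoriality of $F$ and $F^{-1}$, rather than to any analytic or structural difficulty.
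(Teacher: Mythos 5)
Your proposal is correct and coincides with the paper's own proof: the paper defines the same shear re-indexing, namely the mutually inverse functors $\Psi\colon G(\A)\to C(\gr\A)$ with $(\Psi(X))^{ij}=X^{i-j,j}$ and $\Psi^{-1}$ with $(\Psi^{-1}(X))^{ij}=X^{i+j,j}$, which are exactly your $F^{-1}$ and $F$, and likewise verifies by direct index-bookkeeping that the differentials of bidegree $(1,n)$ and $(1-n,n)$, the compatibility conditions, and the convolution composition laws transport verbatim. The only cosmetic difference is that you start from $C(\gr\A)\to G(\A)$ while the paper starts from the opposite direction.
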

\begin{proof} We define a functor $\Psi\colon G(\A) \to C(\gr\A)$ as follows:
for any object $X=\{X^{\ast \ast}, d_i^{\ast \ast}\colon i=0,1, \cdots\}$ in $G(\A)$,
\begin{align*}
(\Psi(X))^{ij}= & X^{i-j, j}\\
d_{\Psi(X), n}^{ij}=& d_n^{i-j,j}\colon (\Psi(X))^{ij}\to (\Psi(X))^{i+1,j+n};
\end{align*}
and for any morphism $\al=\{\al_n^{\ast \ast}\colon n\ge 0\}\colon X\to Y$,
\[(\Psi(\al))_n^{ij}=\al_n^{i-j,j}\colon (\Psi(X))^{ij} \to (\Psi(Y))^{i,j+n}.\]
By direct calculation, we have that $\Psi$ is an isomorphic functor with the inverse functor $\Psi^{-1}\colon G(\A)\to C(\gr\A)$ given by
\begin{align*}
(\Psi^{-1}(X))^{ij}= & X^{i+j, j}\\
d_{\Psi^{-1}(X), n}^{ij}=& d_n^{i+j,j}\colon (\Psi^{-1}(X))^{ij}\to (\Psi^{-1}(X))^{i+1-n,j+n};
\end{align*}
for any object $(X^{\bu \bu}, d^{\bu \bu}_n, n=0,1,)$ in $C(\gr\A)$, and
\[(\Psi^{-1}(f))_n^{ij}=f_n^{i+j,j}\colon (\Psi^{-1}(X))^{ij}\to (\Psi^{-1}(Y))^{i-n,j+n}.\]
for any morphism $f=\{f_0, f_1, \cdots, f_n, \cdots\}\colon X\to Y$ in $C(\gr\A)$.
\end{proof}

By the above isomorphism and Theorem \ref{1-ex pair}, we know that the category $G(\A)$ admit a Frobenius exact structure and the corresponding stable category is a triangulated category.

\begin{rmk}
Rickard studies a homotopy relation on
$G(\A)$, see \cite[Definition 2.8]{Ric} for more details.
It coincides with the homotopy relation corresponding to the stable category of $(C(\gr\A), \E)$,
rather than the $\eta$-homotopy relation introduced in Definition \ref{def of homotopy}.
\end{rmk}

\subsection{Two functors}

We denote by $\grb\A$ the full additive subcategory of $\gr\A$
consisting of all objects $X^\bu$ with finitely many nonzero $X^i$ in $\A$.
Clearly, $\grb\A$ is closed under the action of the functor $(1)$.
Replacing $\gr\A$ by
$\grb\A$, we get a new Frobenius exact structure $\E_\eta$ on the category $C(\grb\A)$.

We consider the functor ${\Xi}\colon \grb\A \to \A$ defined as
follows. For any object $X^\bu$ and any morphism $f=\{f_0,
f_1,\cdots\} \colon X^\bu \to Y^\bu$ in $\grb\A$,
\begin{align*}
{\Xi}(X^\bu)=& \bigoplus_{j}X^j,\\
{\Xi}(f)= &
\begin{pmatrix}
f_0 \\
f_1 & f_0\\
\vdots & \vdots &\ddots\\
f_n &f_{n-1} & \cdots &f_0 \\
\vdots & \ddots &\ddots  &\ddots & \ddots
\end{pmatrix}.
\end{align*}
Clearly, each column and each row in $\Xi(f)$ have only finitely many nonzero entries for any $f\colon X^\bu \to Y^\bu$ in $\Z_+\A$.
Indeed, if the additive category $\A$ admits infinite direct sums, then one can similarly define the functor $\Xi\colon \gr\A\to \A$.

Naturally, the functor ${\Xi}$ induces a functor from
$\C(\grb\A)$ to $\C(\A)$,
still denoted by $\Xi$.
A relation between the homotopic categories
$K_{\eta}(\grb\A)$ and $K(\A)$ is established by our next
proposition.

\begin{prop}\label{Xi is tri functor} The functor $\Tot$ induces a triangle functor from
$K_{\eta}(\grb\A)$ to $K(\A)$.
\end{prop}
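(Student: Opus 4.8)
The goal is to show that $\Xi\colon C(\grb\A)\to C(\A)$ descends to a triangle functor $K_\eta(\grb\A)\to K(\A)$. My plan is to verify three things in turn: that $\Xi$ is well-defined on the level of the stable categories (i.e.\ it sends $\eta$-null-homotopic maps to null-homotopic maps), that it commutes with the respective translation functors, and that it sends standard triangles to triangles.

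First I would make the translation functors match up. The translation on $K_\eta(\grb\A)$ is $(1)[1]$, while on $K(\A)$ it is the ordinary shift $[1]$. So before anything else I would record the key computation that $\Xi$ intertwines these two: there should be a natural isomorphism $\Xi(X^\bu(1)[1])\cong \Xi(X^\bu)[1]$ in $C(\A)$. Concretely, passing from $X^\bu$ to $X^\bu(1)$ reindexes $X^j\mapsto X^{j+1}$, and since $\Xi(X^\bu)=\bigoplus_j X^j$ takes the total direct sum, the reindexing is absorbed and $\Xi(X^\bu(1))\cong\Xi(X^\bu)$ canonically; the extra $[1]$ on each side then matches. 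I would write out how $\Xi$ acts on differentials (using the matrix description of $\Xi(f)$ applied to the differential $d_X$, whose components are the $d_{X,n}$) to confirm the complex structure is preserved and the sign conventions of $[1]$ agree.

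Next I would check compatibility with null-homotopies, which is the crux. By Proposition~\ref{prop of homotopy} (or Corollary~\ref{eta-homotopy in C(Z+A)}), a chain map $f$ in $C(\grb\A)$ has $\overline{f}=0$ in $K_\eta(\grb\A)$ precisely when the $\eta$-null-homotopy equations \eqref{1-null-homotopic} hold, i.e.\ there are maps $s^{ij}_n\colon X^{ij}\to Y^{i-1,j+n-1}$ satisfying the stated identities. The plan is to assemble these $s^{ij}_n$ into a single homotopy $\Xi(s)\colon \Xi(X^\bu)\to \Xi(Y^\bu)[-1]$ in $C(\A)$ via the same triangular-matrix recipe used to define $\Xi$ on morphisms, and then verify that $\Xi(f)=\Xi(s)\,d+d\,\Xi(s)$ in $C(\A)$. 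The point is that the two families of equations in \eqref{1-null-homotopic} — the degree-$0$ relation and the general-$n$ relation — are exactly the matrix entries of the identity $\Xi(f)=\Xi(s)d_{\Xi(X)}+d_{\Xi(Y)}\Xi(s)$ once everything is summed over $j$ and the convolution (Cauchy-product) bookkeeping of the $\Xi$-matrices is carried out. I expect this bookkeeping, matching the convolution sums $\sum_{p+q=n+1}$ against the off-diagonal entries of the product of triangular matrices, to be the main obstacle; it is routine but must be done carefully so that the degree shift produced by $\eta$ (the reason we get $Y^{i-1,j+n-1}$ rather than $Y^{i-1,j+n}$) lines up with the ordinary homotopy in $C(\A)$. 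This shows $\Xi$ is a well-defined additive functor on stable categories.

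Finally I would verify that $\Xi$ is triangulated. Recall from Theorem~\ref{Frob cat (1)} and the discussion after it that every standard triangle in $K_\eta(\grb\A)$ has the form
\[
X^\bu \xr{\overline{f^\bu}} Y^\bu \xr{\left(0\atop 1\right)} \cone(f^\bu\eta_{X^\bu}) \xr{(1\ 0)} X^\bu(1)[1].
\]
Applying $\Xi$, I would show $\Xi(\cone(f^\bu\eta_{X^\bu}))$ is isomorphic in $C(\A)$ to the ordinary mapping cone $\cone(\Xi(f^\bu))$, using that $\Xi$ is additive (hence commutes with the direct sum defining the cone) together with the natural isomorphism $\Xi(X^\bu(1))\cong\Xi(X^\bu)$ established in the first step; the effect of $\eta_{X^\bu}$ under $\Xi$ becomes the relevant identity/shift map, so $\Xi(f^\bu\eta_{X^\bu})$ corresponds to $\Xi(f^\bu)$. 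Under this identification the image of the standard triangle becomes the standard triangle in $K(\A)$ attached to $\Xi(f^\bu)$, and the connecting map $(1\ 0)$ lands correctly in $\Xi(X^\bu)[1]$ via the translation comparison of the first step. Since every triangle in $K_\eta(\grb\A)$ is isomorphic to a standard one and $\Xi$ respects isomorphisms, $\Xi$ carries triangles to triangles, completing the proof that it is a triangle functor.
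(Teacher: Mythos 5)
Your proof is correct, but it takes a genuinely different route from the paper. The paper does not verify the triangulated structure by hand: it invokes Happel's criterion (\cite[Lemma I.2.8]{Ha1}), by which it suffices to check that $\Xi\colon (\C(\grb\A),\E_\eta)\to (\C(\A),\E)$ is an exact functor between Frobenius categories that preserves projective objects. Exactness is immediate because every $\eta$-conflation is in particular chainwise split and $\Xi$ is additive; preservation of projectives follows from the computation $\Xi(\eta_{V^{i,\bu}})=\Id_{\Xi(V^{i,\bu})}$ (under the canonical identification $\Xi(V^\bu(1))\cong\Xi(V^\bu)$), whence $\Xi(\cone(\eta_{V^{\bu\bu}}))=\cone(\Id_{\Xi(V^{\bu\bu})})$, and projectives in $(\C(\grb\A),\E_\eta)$ are direct summands of such cones. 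The general theorem then delivers the triangle functor, including the compatibility of translations and the triangle axioms, with no explicit homotopy bookkeeping. You instead redo this in concrete terms: you descend to stable categories by assembling an $\eta$-null-homotopy $\{s_n^{ij}\}$ into a triangular matrix $\Xi(s)$ and matching the convolution sums $\sum_{p+q=n+1}$ in \eqref{1-null-homotopic} against the graded components of $\Xi(f)=\Xi(s)d_{\Xi(X)}+d_{\Xi(Y)}\Xi(s)$ (this matching is correct: the first equation of \eqref{1-null-homotopic} is the degree $-1$ component and the second the degree $n$ component), you establish $\Xi(X^\bu(1)[1])\cong\Xi(X^\bu)[1]$ directly, and you identify $\Xi(\cone(f^\bu\eta_{X^\bu}))\cong\cone(\Xi(f^\bu))$ using $\Xi(\eta_{X^\bu})=\Id$. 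Your approach buys explicitness and self-containedness, at the cost of needing the (true, but worth stating) fact that every triangle in $K_\eta(\grb\A)$ is isomorphic to a standard one, which Happel's construction provides; the paper's approach is shorter and gets naturality for free, but hides the formulas. Note also that your key identifications are exactly the computations the paper makes inside Happel's criterion, so the two arguments share their computational core while differing in the surrounding machinery.
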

\begin{proof} By \cite[Lemma I.2.8]{Ha1}, it suffices to show that
the functor $\Xi\colon \C(\grb\A) \to \C(\A)$ is exact and preserves
projective objects, where $\C(\grb\A)$ is the  category of complexes in
$\grb\A$ together with the Frobenius exact structure
$\E_{\eta}$, and $\C(\A)$ is the complex category of $\A$
together with the Frobenius exact structure given by chainwise
split short exact sequences of complexes.

Let $X^{\bu \bu} \to Y^{\bu \bu} \to
Z^{\bu \bu}$ be any $\eta$-conflation. Since  it is also a
conflation in $\E$, we have that  $X^{i,\bu} \to Y^{i,\bu} \to
Z^{i,\bu}$ is split in $\grb\A$ for any $i\in \Z$. By definition,
$\Xi(X^{i,\bu}) \to \Xi(Y^{i,\bu}) \to \Xi(Z^{i,\bu})$ is split in
$\A$, and then $\Xi(X^{\bu \bu}) \to \Xi(Y^{\bu \bu}) \to
\Xi(Z^{\bu \bu})$ is a conflation in $\C(\A)$.

For any object $V^{\bu \bu}$ in $C(\grb\A)$,
$\cone(\eta_{V^{\bu \bu}})$ is a projective object in
$(C(\grb\A), \E_{\eta})$. Observe that
$\Xi(V^{i,\bu})=\oplus_j V^{i j}$ and
$\Xi(\eta_{V^{i,\bu}})=\Id_{\Xi(V^{i,\bu})}$. An easy
computation shows that
$\Xi(\cone(\eta_{V^{\bu \bu}}))=\cone(\Id_{\Xi(V^{\bu \bu})})$,
which is a projective object in $\C(\A)$. For any projective object
$P^{\bu \bu}$ in $(\C(\grb\A), \E_{\eta})$, $P^{\bu \bu}$ is a
direct summand of some $\cone(\eta_{V^{\bu \bu}})$ and hence
$\Xi(P^{\bu \bu})$ is a direct summand of
$\cone(\Id_{\Xi(V^{\bu \bu})})$. It follows that $\Xi$ preserves
projective objects.
\end{proof}

Recall that the construction of the functor $\Phi$ in \cite[Proposition 2.11]{Ric}.
Let $\Lambda$ be a ring with identity, and let $\Proj\Lambda$ be the category of
projective modules and $\proj{\Lambda}$ be the category of finitely
generated projective modules over $\Lambda$. For an additive category $\A$, we denote  by
$K(\A)$ (\emph{resp.} $K^b(\A)$ ) the homotopy category of
unbounded (\emph{resp.}  bounded) complexes in $\A$. In \cite{Ric}, Rickard denotes $G(\Proj\La)$ by $G(\La)$.
We also use this notation later.

Recall that a titling complex $T$ means an object of $K^{b}(\proj{\Lambda})$ satisfying
\begin{enumerate}

\item[(i)]  $\Hom_{K^{b}(\proj{\Lambda})}(T,T[n])=0$ for any $n\neq 0$;

\item[(ii)] $\add T$ generates $K^{b}(\proj\Lambda)$ as a triangulated
category,
\end{enumerate}
where $\add T$ is the full subcategory of $K^{b}(\proj\Lambda)$
consisting of all direct summands of finite direct sums of copies of $T$, see \cite[Definition 6.5]{Ric}.

Let $X$ be a complex in $C(\Sum T)$. To be precise, $X=(X^{\bu \bu}, \delta_0^{\bu \bu}, \delta_1^{\bu \bu})$,
where $X^{\bu, j}$ together with $\delta_0^{\bu,j}=(\delta_0^{i j}\colon X^{i j}\to X^{i+1,j})_{i\in \Z}$ is an object in $\Sum T$ for each $j\in \Z$,
and the differential $\delta_1^{\bu,j} \colon X^{\bu,j}\to X^{\bu,j+1}$ satisfies $\delta_1\delta_0=\delta_0\delta_1$
and $\delta_1\delta_1 \colon X^{\bu,j}\to X^{\bu,j+2}$
is null-homotopic since $\Sum T$ is a full subcategory of $K(\Proj\Lambda)$.

We define the operation $\Theta\colon C(\Sum T) \to G(\La)$ as follows:
by Proposition 2.6 and Proposition 2.7 in \cite{Ric} and the isomorphism in Proposition \ref{C(grA) and G(A)}, for any object $X$ in $C(\Sum T)$, there exists an object
$(\Theta(X), d_n\colon n=0,1,\cdots)$ in $C(\gr\Proj\La)$ such that $(\Theta(X))^{ij}=X^{i-j, j}$,
$d_0^{ij}=\delta_0^{i-j,j}$ and $d_1^{ij}=(-1)^{j}\delta_1^{i-j,j}$ for any $i,j\in \Z$, and
for any morphism $\al\colon X \to Y$ in $C(\Sum T)$, there exists a morphism $\Theta(\al)=\{f_0, f_1, \cdots, f_n,\cdots \}\colon \Theta(X)\to \Theta(Y)$
such that $f_0^{ij}=\al^{i-j, j}$ for any $i, j\in \Z$.

By the $\eta$-homotopy relation introduced in Proposition \ref{1-null-homotopic}, we show that
the operation $\Theta$ induces a triangle functor from $K(\Sum T)$ to $K_\eta(\gr\Proj\La)$.
For this, we need the following lemma.

\begin{lem}\label{s0 s1} Let $f=\{f_0, f_1, \cdots, f_n,\cdots \}\colon X \to Y$
be a morphism in $C(\gr\Proj\La)$, where $X$ and $Y$ are in the image of $\Theta$.
Then $f$ is $\eta$-null homotopic if and only if there exist $s_{0}=\{s_0^{ij}\colon X^{ij}\to Y^{i-1,j-1}\}$ and $s_{1}=\{s_1^{ij}\colon X^{ij}\to Y^{i-1,j}\}$ such
that
\begin{align*}
0&=d_{Y,0}^{i-1,j-1}s_{0}^{ij}+s_{0}^{i+1,j}d_{X,0}^{ij},\\
f_{0}&=d_{Y,1}^{i-1,j-1}s_{0}^{ij}+
 d_{Y,0}^{i-1,j}s_{1}^{i j}+s_{0}^{i+1,j+1}d_{X,1}^{ij}+s_{1}^{i+1,j}d_{X,0}^{ij}.
\end{align*}
In particular, if $f_0=0$, then $f\sim_\eta 0$.
\end{lem}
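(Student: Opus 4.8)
The plan is to derive both directions from the explicit $\eta$-null-homotopy criterion of Corollary~\ref{eta-homotopy in C(Z+A)}, organised through one operator identity. I would introduce a formal variable $t$ recording the $j$-degree and set $D_X=\sum_{n\ge 0}d_{X,n}t^{n}$, $D_Y=\sum_{n\ge 0}d_{Y,n}t^{n}$ and $F=\sum_{n\ge 0}f_{n}t^{n}$, regarded as operators whose convolution products reproduce the index sums in \eqref{d0 d_1 cdots d_n} and \eqref{f and d}. In these terms the structure relations read $D_X^{2}=0=D_Y^{2}$ and $FD_X=D_YF$, while Corollary~\ref{eta-homotopy in C(Z+A)} says precisely that $f\sim_\eta 0$ if and only if $F=SD_X+D_YS$ for some $S=\sum_{n\ge 0}s_{n}t^{\,n-1}$; the $t^{-1}$- and $t^{0}$-components of this single equation are exactly the two displayed equalities in $s_0,s_1$. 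The forward implication is then immediate: a full $\eta$-null-homotopy supplies such an $S$, whose $s_0,s_1$ solve the two equations.

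For the converse I would build $S$ by induction on the $t$-degree, starting from the given $s_0,s_1$. Suppose $S_{m}=\sum_{n=0}^{m}s_{n}t^{\,n-1}$ has been produced so that every component of $R_{m}:=F-(S_{m}D_X+D_YS_{m})$ of degree $\le m-1$ vanishes, and write $c_{m}$ for its $t^{m}$-coefficient. Using $D_X^{2}=D_Y^{2}=0$ together with $FD_X=D_YF$ one computes $R_{m}D_X=D_YR_{m}$, and comparing the leading ($t^{m}$) coefficients gives $c_{m}d_{X,0}=d_{Y,0}c_{m}$. Thus, for each fixed $j$, $c_{m}$ is a chain map between the vertical complexes $(X^{\bu,j},d_{X,0})$ and $(Y^{\bu,j+m},d_{Y,0})$. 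Finding $s_{m+1}$ is the same as exhibiting a vertical null-homotopy of $c_m$, which then cancels the $t^{m}$-term and advances the induction.

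The decisive point is that this null-homotopy always exists, and here I would use the hypothesis that $X,Y$ lie in the image of $\Theta$. By construction the vertical complexes above are the slices $\widetilde X^{\bu,j},\widetilde Y^{\bu,j+m}$ of objects $\widetilde X,\widetilde Y\in C(\Sum T)$, and the reindexing $(\Theta\widetilde X)^{ij}=\widetilde X^{\,i-j,j}$ shows that $c_m$ lowers internal degree by $m$; hence it represents a class in $\Hom_{K(\Proj\La)}\big(\widetilde X^{\bu,j},\widetilde Y^{\bu,j+m}[-m]\big)$. Since $T$ is a tilting complex it is a compact object of $K(\Proj\La)$ with $\Hom_{K^{b}(\proj\La)}(T,T[n])=0$ for $n\neq 0$; passing to summands of coproducts yields $\Hom_{K(\Proj\La)}(\Sum T,\Sum T[-m])=0$ for every $m\ge 1$. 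Therefore $c_m$ is null-homotopic; choosing a null-homotopy for each $j$ assembles the desired $s_{m+1}$. The resulting $S$ solves $F=SD_X+D_YS$, so $f\sim_\eta 0$. In particular, when $f_0=0$ the pair $s_0=s_1=0$ satisfies both displayed equations, and this construction manufactures an $\eta$-null-homotopy of $f$.

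The hard part is exactly this vanishing of the obstruction $c_m$, which has two ingredients. The first is the (routine, but bookkeeping-heavy) verification that $c_m$ is a cycle, i.e.\ that it commutes with $d_{X,0}$ and $d_{Y,0}$; the operator reformulation above is tailored to make this a one-line consequence of $D_X^2=D_Y^2=0$ and $FD_X=D_YF$. The second, and the genuinely essential one, is recognising---through the shift $(\Theta\widetilde X)^{ij}=\widetilde X^{\,i-j,j}$ baked into $\Theta$---that $c_m$ has nonzero internal degree $-m$, so that the tilting orthogonality $\Hom_{K(\Proj\La)}(\Sum T,\Sum T[-m])=0$ applies. It is this second observation that explains why only $s_0$ and $s_1$ are needed in the statement: the higher homotopies $s_2,s_3,\dots$ are produced automatically from the tilting condition.
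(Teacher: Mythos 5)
Your proof is correct and takes essentially the same route as the paper's: the identical induction on the index $n$, with each new $s_{k+1}$ obtained as a null-homotopy of the obstruction $c_k$, which commutes with the vertical differentials $d_{X,0}, d_{Y,0}$ and, being a map of nonzero shift between objects of $\Sum T$, vanishes in $K(\Proj\La)$ by the tilting orthogonality. Your generating-function bookkeeping (deducing $R_mD_X=D_YR_m$ from $D_X^2=D_Y^2=0$ and $FD_X=D_YF$) merely streamlines the paper's long displayed computation, and your compactness remark makes explicit a point the paper's appeal to ``the definition of tilting complexes'' leaves implicit.
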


\begin{proof} 
By the equality \eqref{1-null-homotopic} the ``only if" is obvious, and
we use the induction to prove the ``if" by Corollary \ref{eta-homotopy in C(Z+A)}. For convenience, we omit all supscripts.
Suppose that there exist $s_{0}$ and  $s_{1}$ such that
\begin{align*}
0= & d_{Y,0}s_{0}+s_{0}d_{X,0},\\
f_{0}= & d_{Y,1}s_{0}+d_{Y,0}s_{1}+s_{0}d_{X,1}+s_{1}d_{X,0}.
\end{align*}
Proceeding by induction assume that there exists maps
$s_{0},s_{1},\cdots ,s_{k}$ such that
\[f_n=s_{n+1}d_{X,0}+s_nd_{X,1}+\cdots+s_0d_{X,n+1}+d_{Y,n+1}s_0+d_{Y,n}s_1+\cdots+d_{Y,0}s_{n+1}\]
for $n<k$.
By
\[f_kd_{X,0}+f_{k-1}d_{X,1}+\cdots+f_0d_{X,k}=d_{Y,k}f_0+d_{Y, k-1}f_1+\cdots+d_{Y,0}f_k,\]
and the inductive hypothesis, we have
\begin{align*}
& d_{Y,0}(f_{k}-s_{0}d_{X,k+1}-s_{1}d_{X,k}-\cdots-s_{k}d_{X,1}-d_{Y,k+1}s_{0}-d_{Y,k}s_{1}-\cdots-d_{Y,1}s_{k})\\
= & (-d_{Y,1}f_{k-1}-\cdots-d_{Y,k}f_{0}+f_{0}d_{X,k}+\cdots+f_{k}d_{X,0})+s_{0}d_{X,0}d_{X,k+1}\\
&+(d_{Y,1}s_{0}+s_{0}d_{X,1}+s_{1}d_{X,0}-f_{0})d_{X,k}+\cdots\\
&+(d_{Y,1}s_{k-1}+\cdots +d_{Y,k}s_{0}+s_{0}d_{X,k}+\cdots+s_{k}d_{X,0}-f_{k-1})d_{X,1}\\
&+(d_{Y,1}d_{Y,k}+d_{Y,2}d_{Y,k-1}+\cdots +d_{Y,k+1}d_{Y,0})s_{0}+\cdots\\
& +(d_{Y,1}d_{Y,1}+d_{Y,2}d_{Y,0})s_{k-1}+d_{Y,1}d_{Y,0}s_{k}\\
= &d_{Y,1}(-f_{k-1}+s_{0}d_{X,k}+\cdots+s_{k-1}d_{X,1}+d_{Y,k}s_{0}+\cdots+d_{Y,0}s_{k})\\
&+d_{Y,2}(-f_{k-2}+s_{0}d_{X, k-1}+\cdots+s_{k-2}d_{X,1}+d_{Y,k-1}s_{0}+\cdots+d_{Y,0}s_{k-1})
+\cdots\\ &+d_{Y,k}(-f_{0}+d_{Y,1}s_{0}+d_{Y,0}s_{1}
+s_{0}d_{X,1})\\&+s_{0}(d_{X,0}d_{X,k+1}+d_{X,1}d_{X,k}+\cdots +d_{X,k}d_{X,1})
+\cdots+s_{k}d_{X,0}d_{X,1}+f_{k}d_{X,0}\\
= &(f_{k}-s_{0}d_{X,k+1}-s_{1}d_{X,k}-\cdots-s_{k}d_{X,1}-d_{Y,k+1}s_{0}-d_{Y,k}s_{1}-\cdots-d_{Y,1}s_{k})d_{X,0}.
\end{align*}
It follows that
$$f_{k}-s_{0}d_{X,k+1}-s_{1}d_{X,k}-\cdots-s_{k}d_{X,1}-d_{Y,k+1}s_{0}-d_{Y,k}s_{1}-\cdots-d_{Y,1}s_{k}$$
is a morphism in $\Hom_{K(\Proj\Lambda)}(X^{\bu,n+ j},Y^{\bu,j})$. By the definition of tilting complexes, we have \[\Hom_{K(\Proj\Lambda)}(X^{\bu,n+ j},Y^{\bu,j})=0.\]
So there exists $s_{k+1}$
such that
\[f_{k}-(s_{0}d_{X,k+1}+\cdots+s_{k}d_{X,1}+d_{Y,k+1}s_{0}+\cdots+d_{Y,1}s_{k})
=d_{Y,0}s_{k+1}+s_{k+1}d_{X,0},\]
and in consequence,
\[f_{k}=s_{0}d_{X,k+1}+s_{1}d_{X,k}+\cdots+s_{k+1}d_{X,0}+d_{Y,k+1}s_{0}+d_{Y,k}s_{1}+\cdots+d_{Y,0}s_{k+1}.\]
It follows from Corollary \ref{eta-homotopy in C(Z+A)} that $f$ vanishes in $K_\eta(\gr\Proj\La)$.
\end{proof}

\begin{prop} \label{Theta is tri functor}
The operation $\Theta$ induces a triangle functor from
$K(\Sum T)$ to $K_\eta(\gr\Proj\Lambda)$, still denoted by $\Theta$.
\end{prop}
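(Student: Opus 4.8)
The plan is to show that, although $\Theta$ is only an \emph{operation} on $C(\Sum T)$ (its higher components $f_1,f_2,\dots$ depend on choices), it becomes a genuine additive functor after passage to $\eta$-homotopy, and that this functor is compatible with suspensions and sends standard triangles to standard triangles. The engine for everything is the rigidity supplied by Lemma \ref{s0 s1}: a morphism between objects in the image of $\Theta$ is $\eta$-null-homotopic as soon as its $0$-th component vanishes. First I would record the consequences of this rigidity. If $\Theta(\al)$ and $\Theta'(\al)$ arise from two choices of higher components, then their difference has vanishing $0$-th component, so $\Theta(\al)\sim_\eta\Theta'(\al)$; thus the $\eta$-homotopy class of $\Theta(\al)$ is well defined. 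Comparing $0$-th components through the composition and identity formulas of Definition \ref{def of B}, the morphisms $\Theta(\al\beta)$ and $\Theta(\al)\Theta(\beta)$ (\emph{resp.} $\Theta(\id)$ and $\id$, \emph{resp.} $\Theta(\al+\al')$ and $\Theta(\al)+\Theta(\al')$) agree in degree $0$, hence are $\eta$-homotopic by Lemma \ref{s0 s1}. This makes $\Theta$ an additive functor from $C(\Sum T)$ to $K_\eta(\gr\Proj\La)$.

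Next I would show that this functor annihilates the morphisms that are null-homotopic in $C(\Sum T)$, so that it factors through $K(\Sum T)$. A morphism is null-homotopic precisely when it factors through a contractible complex, and every contractible complex is a direct summand of some $\cone(\id_Z)$ with $Z\in C(\Sum T)$. A direct index computation shows that $\Theta(\cone(\id_Z))$ and $\cone(\eta_{\Theta(Z)})$ have the same underlying bigraded object and, up to the standard signs, the same $0$-th differential; by the rigidity above, the comparison map that is invertible in degree $0$ is an isomorphism in $K_\eta(\gr\Proj\La)$, so $\Theta(\cone(\id_Z))\cong\cone(\eta_{\Theta(Z)})$ is projective--injective by Lemma \ref{proj and inj obj}. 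Since, up to $\eta$-homotopy, $\Theta$ respects factorizations, any $\Theta(\al)$ with $\al$ null-homotopic factors (up to $\eta$-homotopy) through an $\E_\eta$-injective object and is therefore $\eta$-null-homotopic. Hence $\Theta$ induces an additive functor $K(\Sum T)\to K_\eta(\gr\Proj\La)$.

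Finally I would upgrade this to a triangle functor. Tracking the bigradings yields $\Theta(X[1])=\Theta(X)(1)[1]$, the sign twist $(-1)^j$ in the definition of $\Theta$ matching the sign of the shift, so $\Theta$ commutes with the two translation functors, where $(1)[1]$ is the translation of $K_\eta(\gr\Proj\La)$ recorded after Theorem \ref{Frob cat (1)}. It then remains to send the standard triangle $X\xr{\al}Y\to\cone(\al)\to X[1]$ to a triangle. The same computation as for $\cone(\id_Z)$ shows that $\Theta(\cone(\al))$ and $\cone(\Theta(\al)\eta_{\Theta(X)})$ have the same underlying bigraded object and, up to signs, the same $0$-th differential, so by rigidity they are isomorphic in $K_\eta(\gr\Proj\La)$, and one checks the isomorphism is compatible with the structure maps of the two cones. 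Thus $\Theta$ carries each standard triangle of $K(\Sum T)$ to the standard triangle of $\Theta(\al)$ in $K_\eta(\gr\Proj\La)$, and is a triangle functor.

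I expect the main obstacle to be the cone identifications $\Theta(\cone(\al))\cong\cone(\Theta(\al)\eta_{\Theta(X)})$ together with the compatibility of these isomorphisms with the structure maps. Because $\Theta$ is not a functor on $C(\Sum T)$ on the nose, one cannot invoke Happel's criterion \cite[Lemma I.2.8]{Ha1} as was done for $\Xi$ in Proposition \ref{Xi is tri functor}; instead the matching of the two mapping-cone differentials must be pushed through the rigidity of Lemma \ref{s0 s1}, keeping careful track of the sign twists $(-1)^j$ and of the choices hidden in the higher components of $\Theta$.
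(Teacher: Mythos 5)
Your proposal is correct in substance and runs on the same engine as the paper's proof, namely the rigidity clause of Lemma \ref{s0 s1} (a morphism between objects in the image of $\Theta$ with vanishing degree-$0$ component is $\eta$-null-homotopic), but it routes the crucial step differently. Where the paper takes a null-homotopy $\al^{\bu,j}=s_0^{\bu,j+1}\delta_{X,1}^{\bu,j}+\delta_{Y,1}^{\bu,j-1}s_0^{\bu,j}$ in $C(\Sum T)$, lifts the implicit inner homotopy to a map $s_1$, and feeds the pair $(s_0,s_1)$ into the full inductive statement of Lemma \ref{s0 s1}, you instead factor the null-homotopic $\al$ through $\cone(\id_X)$, identify $\Theta(\cone(\id_X))$ with the projective--injective object $\cone(\eta_{\Theta(X)})$ of Lemma \ref{proj and inj obj}, and conclude by Proposition \ref{prop of homotopy}. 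Both routes work: yours uses only the weak case of the rigidity lemma (the ``$f_0=0$'' corollary, i.e.\ $s_0=s_1=0$) and recycles the cone computation you need anyway for the triangle axiom, whereas the paper's direct conversion of homotopy data avoids any object-level identification at that stage. You also make explicit the well-definedness, additivity and multiplicativity of $\Theta$ up to $\eta$-homotopy, which the paper compresses into ``by the construction of $\Theta$, it suffices to show\dots''; spelling this out is a genuine improvement, since these checks are exactly where the failure of $\Theta$ to be a functor on $C(\Sum T)$ must be absorbed.

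One soft spot: you appeal to ``rigidity'' to \emph{produce} the isomorphisms $\Theta(\cone(\id_Z))\cong\cone(\eta_{\Theta(Z)})$ and $\Theta(\cone(\al))\cong\cone(\Theta(\al)\eta_{\Theta(X)})$ in $K_\eta(\gr\Proj\La)$, but Lemma \ref{s0 s1} only yields \emph{uniqueness and invertibility} of comparison maps; the \emph{existence} of a morphism with invertible degree-$0$ component between two objects sharing the same bigraded object and the same $d_0,d_1$ requires the lifting results of \cite{Ric} (Propositions 2.6 and 2.7) once more. The cheaper repair --- and the paper's actual move --- is to observe that $\cone(\Theta(\al)\eta_{\Theta(X)})$ is itself a legitimate \emph{value} of the operation $\Theta$ on $\cone(\al)$: its differentials $d_n$ satisfy the defining constraints with the prescribed $d_0$ and $d_1$, so for suitable choices one has the equality $\Theta(\cone(\al))=\cone(\Theta(\al)\eta_{\Theta(X)})$ on the nose, the comparison of the two standard triangles commutes strictly, and the ``compatibility with structure maps'' check you defer evaporates. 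With that adjustment (and, for the injectivity step, the one-line remark that an object stably isomorphic to an injective object is a retract of an injective object, hence injective), your argument goes through completely; your closing caution that Happel's criterion \cite[Lemma I.2.8]{Ha1} is unavailable here, unlike for $\Xi$ in Proposition \ref{Xi is tri functor}, is exactly right.
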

\begin{proof} We first prove that $\Theta\colon K(\Sum T)\to K_\eta(\gr\Proj\La)$ is a functor.
By the construction of $\Theta$, it suffices to show that $\Theta(\al)$ is $\eta$-null homotopy if $\al\colon X\to Y$
is null-homotopy in $C(\Sum T)$,  where $X=(X^{\bu \bu}, \delta_{X,0}^{\bu \bu}, \delta_{X,1}^{\bu \bu})$
and $Y=(Y^{\bu \bu}, \delta_{Y,0}^{\bu \bu}, \delta_{Y,1}^{\bu \bu})$ are complexes in $\Sum T$.

Indeed, if $\al=0$ in $K(\Sum T)$, then there exists a morphism $s_0^{\bu,j}\colon X^{\bu,j}\to Y^{\bu,j-1}$
in $\Sum T$ such that
$\al^{\bu,j}=s_0^{\bu,j+1}\delta_{X,1}^{\bu,j}+\delta_{Y,1}^{\bu,j-1}s_0^{\bu,j}$ in $K(\Sum T)$ for each $j\in \Z$.
It follows that
\[\delta_{Y,0}^{i,j-1}s_0^{i,j-1}+s_0^{i+1,j-1}\delta_{X,0}^{ij}=0,\]
and there exists $s_1^{i j}\colon X^{ij}\to Y^{i-1,j}$ such that
\[\al^{i j}=s_0^{i,j+1}\delta_{X,0}^{i j}+\delta_{Y,0}^{i,j-1}s_0^{i j}
+s_1^{i+1,j}\delta_{X,0}^{i j}+\delta_{Y,0}^{i-1,j}s_1^{i j}\]
for any $i, j\in \Z$. By Lemma \ref{s0 s1}, we have that $\Theta(\al)\colon \Theta(X) \to \Theta(Y)$
vanishes in $K_\eta(\gr\Proj\La)$, and $\Theta\colon K(\Sum T)\to K_\eta(\gr\Proj\La)$ is exactly a functor.

Suppose that $X\xr{\al} Y \xr{\left(0\atop 1\right)} \cone(\al) \xr{(1\ 0)} X[1]$ is
a standard triangle in $K(\Sum T)$.
The mapping cone $\cone(\al)$ is given by $(\cone(\al))^{ij}=X^{i,j+1}\oplus Y^{ij}$,
and
\begin{align*}
\delta_{\cone(\al),0}^{ij}&=\begin{pmatrix} -\delta_{X,0}^{i,j+1} & 0 \\ 0 & \delta_{Y,0}^{ij} \end{pmatrix}\colon X^{i,j+1}\oplus Y^{ij}
\to X^{i+1,j+1}\oplus Y^{i+1,j},\\
\delta_{\cone(\al),1}^{ij}& =\begin{pmatrix} -\delta_{X,1}^{i,j+1} & 0 \\ \al^{i,j+1} & \delta_{Y,1}^{ij} \end{pmatrix}\colon X^{i,j+1}\oplus Y^{ij}
\to X^{i,j+2}\oplus Y^{i,j+1}.
\end{align*}
By definition of $\Theta$, we have
$\Theta(\al)=\{f_0, f_1, \cdots \}\colon \Theta(X) \to \Theta(Y)$ satisfies $f_0^{ij}=\al^{i-j,j}$. Since
$\Theta(\al)\eta_{\Theta(X)}
=\{0, f_0, f_1, \cdots\}\colon \Theta(X)(1)\to \Theta(Y)$,
the mapping cone $\cone(\Theta(\al)\eta_{\Theta(X)})$ is given by
\begin{align*}
(\cone(\Theta(\al)\eta_{\Theta(X)}))^{ij}=& (\Theta(X)(1)[1])^{ij}\oplus (\Theta(Y))^{ij}=X^{i-j,j+1}\oplus Y^{i-j, j},\\
d_{\cone(\Theta(\al)\eta_{\Theta(X)}), 0}^{ij}=&
\begin{pmatrix}
-d_{\Theta(X),0}^{i+1, j+1} & 0\\
0 & d_{\Theta(Y), 0}^{ij}
\end{pmatrix} ,\\
d_{\cone(\Theta(\al)\eta_{\Theta(X)}), n}^{ij}=&
\begin{pmatrix}
-d_{\Theta(X),n}^{i+1, j+1} & 0\\
f_{n-1}^{i+1,j+1} & d_{\Theta(Y), n}^{ij}
\end{pmatrix}, \ (n=1,2,\cdots).
\end{align*}
It immediately follows that $\Theta(\cone(\al))=\cone(\Theta(\al)\eta_{\Theta(X)})$.
On the other hand, we observe that
\[(\Theta(X[1]))^{ij}=(X[1])^{i-j,j}=X^{i-j, j+1}=(\Theta(X))^{i+1,j+1}=(\Theta(X[1](1)))^{ij}\]
and $\Theta(X[1])=\Theta(X)[1](1)$. By direct calculation, we have the following diagram
\[\begin{CD}
\Th(X) @>\Th(\al)>> \Th(Y) @>>> \Th(\cone(\al)) @>>> \Th(X[1])\\
@| @| @| @|\\
\Th(X) @>\Th(\al)>> \Th(Y) @>>> \cone(\Th(\al)\eta_{\Theta(X)}) @>>> \Th(X)(1)[1].
\end{CD}\]
It follows that $\Theta$ sends a triangle in $K(\Sum T)$ to a triangle in $K_\eta(\gr\Proj\La)$ and then $\Theta$ is a triangle functor.
\end{proof}

In \cite[Proposition 2.10]{Ric}, Rickard introduced the functor $\Phi$ from $C(\Sum T)$ to $K(\Proj\La)$
as the composition of a ``total" functor from $G(\La)$ to $K(\Proj\La)$ and an operation $(-)^{\ast\ast}$ from
$C(\Sum T)$ to $G(\La)$. More precisely, for each object $(X, \delta_0, \delta_1)$ of $C(\Sum T)$, one can choose an object $(X^{\ast\ast}, d_n\colon n\ge 0)$ of $G(\La)$ such that $d_0^{ij}=\delta_0^{ij}$ and $d_1^{ij}=(-1)^{i+j}\delta_1^{ij}$ for any $i,j\in \Z$; for each map $\al\colon X\to Y$ in $C(\Sum T)$,
one can choose a morphism $\{\al_n\colon n\ge 0\}\colon X^{\ast\ast} \to Y^{\ast\ast}$ in $G(\La)$ such that $\al_0=\al$.
Defines $\Phi(X)$ as the ``total complex" of $X^{\ast\ast}$ and $\Phi(\al)$ as the map in $K(\Proj\La)$ obtained
by applying the total complex functor to $\{\al_n\colon n\ge 0\}$.

We observe that the operation $(-)^{\ast\ast}$ from $C(\Sum T)$ to $G(\La)$ is not a functor. Even up to the homotopy relation introduced in \cite[Definition 2.8]{Ric}, the operation is still not a functor.
By the isomorphism in Proposition \ref{C(grA) and G(A)}, the above observation means that the operation $\Theta$ can not induce a functor  from $K(\Sum T)$ to $K(\gr\Proj\La)$. The following result show that the functor $\Phi\colon K(\Sum T) \to K(\Proj\La)$
can be realized as the composition of two triangle functors.

\begin{thm}\label{realization}
The functor $\Phi\colon K(\Sum T) \to K(\Proj\La)$ is the composition of
two functors  $\Theta \colon K(\Sum T) \to K_\eta(\gr\Proj\La)$ and $\Xi\colon K_\eta(\gr\Proj\La) \to K(\Proj\La)$.
\end{thm}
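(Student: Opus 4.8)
The plan is to factor both $\Phi$ and $\Xi\circ\Theta$ through Rickard's operation $(-)^{\ast\ast}\colon C(\Sum T)\to G(\La)$. Recall $\Phi=\mathrm{Tot}\circ(-)^{\ast\ast}$, where $\mathrm{Tot}\colon G(\La)\to K(\Proj\La)$ is Rickard's total-complex functor. The two identities I would establish are: (i) under the isomorphism $\Psi\colon G(\La)\xrightarrow{\ \sim\ }C(\gr\Proj\La)$ of Proposition \ref{C(grA) and G(A)}, the total-complex functor is identified with $\Xi$, that is $\mathrm{Tot}=\Xi\circ\Psi$; and (ii) the operation $\Theta$ coincides with $\Psi\circ(-)^{\ast\ast}$ on objects and, after composing with $\Xi$ and passing to $K(\Proj\La)$, on morphisms as well. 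Granting these, $\Xi\circ\Theta=\Xi\circ\Psi\circ(-)^{\ast\ast}=\mathrm{Tot}\circ(-)^{\ast\ast}=\Phi$ as functors $K(\Sum T)\to K(\Proj\La)$, which is the assertion; both sides being triangle functors (Propositions \ref{Theta is tri functor}, \ref{Xi is tri functor} and \cite[Proposition 2.11]{Ric}), the identification is one of triangle functors.

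For (i), let $Z=(Z^{\ast\ast},d_n)$ be an object of $G(\La)$. By the formula for $\Psi$ in Proposition \ref{C(grA) and G(A)}, the complex $\Psi(Z)\in C(\gr\Proj\La)$ has $(\Psi Z)^{ij}=Z^{i-j,j}$ and differential components $d_n^{ij}=d_n^{i-j,j}$, so its complex degree $i$ equals the total degree $p+q$ while the internal grading is $j=q$. Applying $\Xi$ degreewise collapses this internal grading by direct sum, giving $(\Xi\Psi Z)^{i}=\bigoplus_{j}Z^{i-j,j}=\bigoplus_{p+q=i}Z^{pq}$, and the lower-triangular matrix defining $\Xi$ assembles the $d_n$ into the single differential $\sum_n d_n$; this is precisely $\mathrm{Tot}(Z)$. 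The same unwinding applied to a $G(\La)$-morphism $\{\gamma_n\}$ shows $\Xi\Psi(\{\gamma_n\})=\sum_n\gamma_n=\mathrm{Tot}(\{\gamma_n\})$. Hence $\mathrm{Tot}=\Xi\circ\Psi$. Here $\Xi$ is taken in its extension to $\gr\Proj\La$, which is legitimate since $\Proj\La$ has arbitrary direct sums, as noted after the definition of $\Xi$; the proof of Proposition \ref{Xi is tri functor} applies verbatim to show this extension is exact and preserves projectives.

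For (ii), on an object $X=(X^{\bu\bu},\delta_0,\delta_1)$ both $\Psi((-)^{\ast\ast}X)$ and $\Theta(X)$ have underlying bigraded object $X^{i-j,j}$ in bidegree $(i,j)$ with $d_0^{ij}=\delta_0^{i-j,j}$; what must be checked is that the signs on $d_1$ agree, the relevant value being the one $\Psi$ transports from Rickard's $d_1^{pq}=(-1)^{p+q}\delta_1^{pq}$, i.e. $d_1^{ij}=(-1)^{i}\delta_1^{i-j,j}$ (its compatibility with the differential relation \eqref{d0 d_1 cdots d_n} at $n=1$ uses $\delta_0\delta_1=\delta_1\delta_0$). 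On a morphism $\al$, both $\Theta(\al)$ and $(-)^{\ast\ast}(\al)$ are lifts of $\al$ whose order-zero component equals $\al$; since $(-)^{\ast\ast}$ is not a functor these lifts may have different higher components, but any two such lifts differ by a $G(\La)$-morphism with vanishing order-zero component, which by Lemma \ref{s0 s1} is $\eta$-null-homotopic and hence is sent to $0$ by $\Xi$ in $K(\Proj\La)$. This is exactly what makes $\Xi\circ\Theta$ and $\Xi\circ\Psi\circ(-)^{\ast\ast}$ agree on morphisms after passing to homotopy categories, where both are well-defined functors.

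The main obstacle is the sign bookkeeping in (ii): one must fix conventions so that $\Psi\circ(-)^{\ast\ast}$ and $\Theta$ genuinely coincide on objects. Should the conventions fail to match on the nose, the fallback is to construct a diagonal sign-rescaling natural isomorphism $\sigma_X\colon\Xi\Theta(X)\xrightarrow{\ \sim\ }\Phi(X)$, given on each summand $X^{pq}$ by an explicit sign, fixing $d_0$ and converting one $d_1$-convention into the other, and then to verify that $\sigma$ intertwines all the higher differentials $d_n$; this yields a natural isomorphism $\Xi\circ\Theta\cong\Phi$ in place of an equality. The only other delicate point is that $(-)^{\ast\ast}$ is not a functor, which is absorbed entirely by the homotopy-category well-definedness, the key input being the implication ``$f_0=0\Rightarrow f\sim_\eta 0$'' of Lemma \ref{s0 s1}. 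Everything else reduces to routine unwinding of the composition law in $\gr\A$ and of the matrix defining $\Xi$.
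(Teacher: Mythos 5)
Your proposal is correct and takes essentially the same route as the paper's proof, which likewise factors both functors through Rickard's $(-)^{\ast\ast}$ and the isomorphism of Proposition \ref{C(grA) and G(A)}, identifying the ``total'' functor with $\Xi$ composed with that isomorphism and absorbing the non-functoriality of $(-)^{\ast\ast}$ into the $\eta$-homotopy relation (the paper's proof is in fact terser than yours, deferring the morphism-level issue to Proposition \ref{Theta is tri functor}, whose proof rests on Lemma \ref{s0 s1} exactly as you use it). Your sign bookkeeping is a genuine added value: the transported Rickard convention is $d_1^{ij}=(-1)^{i}\delta_1^{i-j,j}$ as you compute (and only this sign satisfies the $n=1$ case of \eqref{d0 d_1 cdots d_n} given $\delta_0\delta_1=\delta_1\delta_0$), so the exponent $(-1)^{j}$ in the paper's definition of $\Theta$ is a typo that your fallback rescaling argument would in any case repair.
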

\begin{proof} By Definition, the functor $\Theta \colon K(\Sum T) \to K_\eta(\gr\Proj\La)$
is induced by the composition of the operation $(-)^{\ast\ast}\colon C(\Sum T)\to G(\La)$ and the isomorphism  $\Psi^{-1}\colon G(\La) \to C(\gr\Proj\La)$. On the other hand, taking  $\A=\Proj\La$ and we have that the functor $\Xi\colon K_\eta(\gr\Proj\La) \to K(\Proj\La)$ is induced by the composition of the isomorphism $\Psi\colon C(\gr\Proj\La) \to G(\La)$ in the proof of Proposition \ref{C(grA) and G(A)} and the ``total" functor from $G(\La)$ to $C(\Proj\La)$ introduced in \cite[Section 2, Remark]{Ric}.
By the construction of the functor $\Phi\colon K(\Sum T)\to K(\Proj\La)$,
we immediately have that $\Phi=\Xi\circ \Theta$.
\end{proof}

\noindent{\textbf{Acknowledgements}}
The authors would like to thank Professor Xiao-Wu
Chen and Professor Yu Ye for suggesting the problem and for many stimulating conversations.

\vspace{0.5cm}

{\footnotesize
\noindent Yan-Fu Ben \\
School of Mathematical Sciences, Anhui University, Hefei, China, 230039\\
E-mail address: byfee@163.com

\vspace{2mm}
\noindent Yan-Hong Bao \\
School of Mathematical Sciences, University of Sciences and Technology of China,
Hefei,China, 230036\\
School of Mathematical Sciences, Anhui University, Hefei, China, 230039\\
E-mail address: yhbao@ustc.edu.cn

\vspace{2mm}
\noindent Xian-Neng Du\\
School of Mathematical Sciences, Anhui University, Hefei, China, 230039\\
E-mail address: xndu@ahu.edu.cn
}

\end{document}